\theoremstyle{plain}
\newtheorem{theorem}{Theorem}[section]
\newtheorem{lemma}[theorem]{Lemma}
\newtheorem{thrm}{Theorem} 
\theoremstyle{definition}
\newtheorem{definition}[theorem]{Definition}%
\numberwithin{equation}{section}
\newcommand{\iflabelexists}[3]{\@ifundefined{r@#1}{#3}{#2}}
\begin{document}

\title[]%
{Sums of Weighted Differentiation Composition Operators}
\author{Soumyadip Acharyya}
\address[Soumyadip Acharyya]{Department of Math, Physical and Life Sciences\\Embry-Riddle Aeronautical University Worldwide\\Daytona Beach, FL 32114-3900}
\email{acharyys@erau.edu}
\thanks{The first author acknowledges that this research was partially supported through Embry-Riddle Aeronautical University research funding, Agency Award ID: 13368.} 
\author{Timothy Ferguson}
\address[Timothy Ferguson]{Department of Mathematics\\University of Alabama\\Tuscaloosa, AL}
\email{tjferguson1@ua.edu}
\thanks{The second author was partially supported by RGC Grant RGC-2015-22 from the University of Alabama.}

\begin{abstract}
  We solve an interpolation problem in $A^p_\alpha$ involving specifying
  a set of (possibly not distinct) $n$ points, where the
  $k^{\textrm{th}}$ derivative at the   $k^{\textrm{th}}$ point is
  up to a constant as large as possible for functions of unit norm.
  The solution obtained has norm bounded by a constant independent of the
  points chosen. 
  As a direct application, we obtain a
  characterization of the order-boundedness of a sum of products of
  weighted composition and differentiation operators acting between
  weighted Bergman spaces. We also characterize the compactness of
  such operators that map a weighted Bergman space into the space of bounded
  analytic functions.
\end{abstract}

\maketitle
Keywords: weighted composition operator, iterated differentiation operator, order-bounded, compactness, Bergman space, Hardy space. 

\section{Introduction}

Let $\varphi$ be an analytic self-map of the unit disk
$\mathbb{D}$ in the complex plane, and $H (\mathbb{D})$ be the
space of all holomorphic functions on $\mathbb{D}$. The
composition operator with symbol $\varphi$ is a linear operator
on $H(\mathbb{D})$ defined by
\begin{equation*}
  C_{\varphi}\left(f\right) = f \circ
   \varphi, \quad  f \in H \left(\mathbb{D}\right).
\end{equation*} 

An important generalization is the weighted composition operator
$uC_{\varphi}$ with weight $u$ (a measurable complex-valued
function on $\mathbb{D}$) and symbol $\varphi$, defined on
$H(\mathbb{D})$ as
\begin{equation*}
\left(uC_{\varphi}\right)\left(f\right) = u \cdot ( f\circ\varphi), \quad f \in H\left(\mathbb{D}\right).
\end{equation*}

The main purpose of studying (weighted) composition operators
acting on a holomorphic Banach space is to describe their
operator theoretic properties in terms of the function properties
of the associated symbols and weights. For basic theory of
composition operators and many earlier developments on a wide
variety of topics involving (weighted) composition operators, see
\cite{1,2}.

For a non-negative integer $n,$ let $D^n$ be the iterated
differentiation operator defined by
\begin{equation*}
  D^n \left(f\right) = f^{(n)}, \quad f \in H\left(\mathbb{D}\right).
\end{equation*}

During recent years, there has been a great interest in studying
product-type operators between holomorphic spaces. Let $\varphi$
be an analytic self-map of $\mathbb{D},$ and
$u \in H (\mathbb{D}).$ For a non-negative integer $n,$
$D^{n}_{u, \varphi}$ is the product-type operator defined as
\begin{equation*}
D^{n}_{u, \varphi}\, (f) = u \,.\, (f^{(n)} \circ \varphi) \,, \quad f \in H \left(\mathbb{D}\right).
\end{equation*}
Note that $D^{n}_{u, \varphi}$ can be written as
$D^{n}_{u, \varphi} = uC_{\varphi} \circ D^n$.

The operator $D^{n}_{u, \varphi}$ is called generalized
weighted composition operator, since it includes many known
operators. For example, if $n = 0$ and $u \equiv 1,$ we obtain
the composition operator with symbol $\varphi.$ If $n = 0,$ we
obtain the weighted composition operator $uC_{\varphi}$ with
weight $u$, and symbol $\varphi.$ When $u \equiv 1$ and
$\varphi (z) = z,$ $D^{n}_{u, \varphi}$ reduces to $D^{n}.$ If
$n = 1$ and $u(z) = \varphi^{'} (z),$ we obtain the product
$DC_{\varphi}.$ Thus generalized weighted composition operators
provide a unified way of
treating these operators. The operator $D^{n}_{u, \varphi}$ and
some of its special cases such as $DC_{\varphi}$ were studied in
\cite{3,4,5,6,7,8,9,10,11,12}. In particular, see
\cite{3,4,6,8,11,12,13,14}, which are focused on these types of
operators on weighted Bergman
spaces.
Some good references for weighted Bergman spaces are \cite{D_Ap, Zhu_Ap}. 

In \cite{14}, Colonna and Sharma studied the operator
$uC_{\varphi} + D^{n}_{v, \varphi}.$
In Theorem 2.7, they established the
following characterization of its order-boundedness between
weighted Bergman spaces. Let
$p, q \in (0, \infty), \alpha \in [-1, \infty), \beta \in (-1,
\infty)$ and let $n$ be a positive integer.
Then 
$uC_{\varphi} + D^{n}_{v, \varphi}: A^{p}_{\alpha} \rightarrow
A^{q}_{\beta}$ is order-bounded if and only if each summand is order
bounded, 
if and only if
\begin{align*}
  &\int_{\mathbb{D}} \dfrac{|u(z)|^q}{(1 - |\varphi(z)|^{2})^{(2+\alpha)\frac{q}{p}}}\,dA_{\beta}(z) < \infty, \\
  &\int_{\mathbb{D}} \dfrac{|v(z)|^q}{(1 - |\varphi(z)|^{2})^{(2+\alpha)\frac{q}{p} + nq}}\,dA_{\beta}(z) < \infty.
\end{align*}

Motivated by the line of study in \cite{14}, we investigate
the order-boundedness of the operator
\[
  T_{u;\varphi}^n = \displaystyle \sum_{k=0}^{n} D^{k}_{u_{k}, \varphi_{k}},
\]
where
$\varphi_{0}, \varphi_{1},.., \varphi_{n}$ are analytic self-maps
of $\mathbb{D},$ and $u_{0}, u_{1},.., u_{n} \in H(\mathbb{D}).$
We obtain the following characterization.

\newcommand{\OBExtensiontext}{
    Let
  $T^n_{u; \varphi} = \displaystyle \sum_{k=0}^{n} D^{k}_{u_{k},
    \varphi_{k}}$. Let
  $p, q \in (0, \infty), \alpha \in [-1, \infty), \beta \in (-1,
  \infty)$ and let $n$ be a positive integer. The following are
  equivalent.
\begin{enumerate} 
\renewcommand{\theenumi}{\alph{enumi}}
\item $T^n_{u; \varphi}: A^p_\alpha \rightarrow A^q_\beta$ is order bounded.
\item Each $D^{k}_{u_{k}, \varphi_{k}}$ is order bounded. 
\item For each $0 \leq k \leq n$ the symbols satisfy the condition
\[
 \int_{\mathbb{D}} \frac{|u_k(z)|^q}{(1 - |\varphi_k(z)|^2)^{(2+\alpha)\frac{q}{p} + kq}} dA_\beta (z) < \infty.
\]
\end{enumerate}
}

\begin{thrm}\label{OBExtension}
\OBExtensiontext
\end{thrm}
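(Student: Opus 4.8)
The plan is to reduce order-boundedness to membership in $L^q(dA_\beta)$ of an explicit ``maximal image'' function, and then to compare these functions across the three conditions. Recall that an operator $S \colon A^p_\alpha \to A^q_\beta$ is order bounded precisely when there is a single nonnegative $g \in L^q(dA_\beta)$ with $|Sf(z)| \le g(z)$ for a.e.\ $z$ and every $f$ in the unit ball of $A^p_\alpha$. Since $A^p_\alpha$ is separable and, for each fixed $z$, the map $f \mapsto Sf(z) = \sum_k u_k(z) f^{(k)}(\varphi_k(z))$ is a finite linear combination of the bounded derivative-evaluation functionals $f \mapsto f^{(k)}(\varphi_k(z))$, hence continuous, the function
\[
G_S(z) = \sup\{\,|Sf(z)| : \|f\|_{A^p_\alpha} \le 1\,\}
\]
is well defined, measurable (the supremum may be taken over a countable dense subset of the unit ball), and agrees with the pointwise supremum for every $z$ by continuity. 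A standard argument then shows $S$ is order bounded if and only if $G_S \in L^q(dA_\beta)$, with $G_S$ the least dominating function. I will apply this to $S = T^n_{u;\varphi}$ and to each summand $D^k_{u_k,\varphi_k}$, writing $G = G_{T^n_{u;\varphi}}$ and $G_k = G_{D^k_{u_k,\varphi_k}}$.

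The equivalence of (b) and (c) is then just the sharp growth estimate for derivative evaluation on $A^p_\alpha$. Since $\sup_{\|f\|\le 1}|f^{(k)}(w)| \asymp (1-|w|^2)^{-[(2+\alpha)/p + k]}$ uniformly in $w \in \mathbb{D}$, we obtain
\[
G_k(z) = |u_k(z)|\,\sup_{\|f\|\le 1}|f^{(k)}(\varphi_k(z))| \asymp \frac{|u_k(z)|}{(1-|\varphi_k(z)|^2)^{(2+\alpha)/p + k}},
\]
so $G_k \in L^q(dA_\beta)$ exactly when the integral in (c) is finite; thus (b) $\Leftrightarrow$ (c). The implication (b) $\Rightarrow$ (a) is immediate from the triangle inequality: $|T^n_{u;\varphi}f(z)| \le \sum_{k=0}^n |D^k_{u_k,\varphi_k}f(z)|$ gives $G(z) \le \sum_{k=0}^n G_k(z)$, and a finite sum of $L^q(dA_\beta)$-functions is again in $L^q(dA_\beta)$ for every $q \in (0,\infty)$, so (b) forces $G \in L^q$, which is (a).

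The substance of the theorem is the reverse implication (a) $\Rightarrow$ (b), and the main obstacle is cancellation: a priori the terms $u_k(z) f^{(k)}(\varphi_k(z))$ could interfere destructively, making $G$ small while some $G_j$ is large. This is exactly where the interpolation result enters. Fixing an index $j$ and a point $z \in \mathbb{D}$, I apply the interpolation theorem to the (possibly coincident) points $\varphi_0(z), \dots, \varphi_n(z)$ to produce a function $f_{z,j} \in A^p_\alpha$ of norm $\le M$, with $M$ independent of $z$ and $j$, for which $f_{z,j}^{(j)}(\varphi_j(z))$ is comparable to the maximal value $(1-|\varphi_j(z)|^2)^{-[(2+\alpha)/p + j]}$ while $f_{z,j}^{(k)}(\varphi_k(z)) = 0$ for $k \ne j$; coincidences among the $\varphi_k(z)$ cause no trouble because the interpolation is of Hermite type, the derivative data at a common point being independent. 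For this choice $T^n_{u;\varphi}f_{z,j}(z) = u_j(z)\, f_{z,j}^{(j)}(\varphi_j(z))$, and since $f_{z,j}/M$ lies in the unit ball,
\[
G(z) \ge \frac{1}{M}\bigl|T^n_{u;\varphi}f_{z,j}(z)\bigr| = \frac{1}{M}\,|u_j(z)|\,\bigl|f_{z,j}^{(j)}(\varphi_j(z))\bigr| \gtrsim \frac{1}{M}\,G_j(z)
\]
for every $z$, with implied constant independent of $z$ by the uniform bound on $M$.

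Consequently $G_j \lesssim G$ pointwise; from (a) we have $G \in L^q(dA_\beta)$, so raising to the $q$-th power and integrating against $dA_\beta$ yields $G_j \in L^q(dA_\beta)$ for each $j$, i.e.\ (b). The entire difficulty is thereby packaged into the two essential features of the interpolation theorem: that all the derivative data can be controlled simultaneously, and that the norm of the interpolant is bounded independently of the chosen points. If the interpolation result is formulated only in the ``all derivatives simultaneously large'' form, one recovers the isolating functions $f_{z,j}$ by prescribing phases and averaging: taking interpolants with $f^{(k)}(\varphi_k(z)) = \varepsilon_k c_k$ for independent signs $\varepsilon_k \in \{\pm 1\}$, the relation $\mathbf{E}_\varepsilon[\varepsilon_j\varepsilon_k] = \delta_{jk}$ gives $\mathbf{E}_\varepsilon[\varepsilon_j\, T^n_{u;\varphi}f^\varepsilon(z)] = u_j(z)c_j$, whence $G_j(z) \asymp |u_j(z)c_j| \le M\,G(z)$, and the argument concludes as before.
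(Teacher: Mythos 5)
Your proof is correct and follows essentially the same route as the paper: the decisive implication (a) $\Rightarrow$ (b)/(c) is obtained exactly as in the paper's proof, by applying Theorem \ref{testfunction} with $\lambda_k = \varphi_k(z)$ to isolate each summand using test functions whose norms are bounded independently of $z$ and $j$, while the equivalence (b) $\Leftrightarrow$ (c) and the implication (b) $\Rightarrow$ (a) are the easy parts (handled in the paper by Lemma \ref{lemma:orderbound} and the stability of order boundedness under finite sums). Your repackaging of order boundedness via the least dominating function $G_S$, and the closing remark about recovering isolation by randomizing signs, are only cosmetic variations on the same argument.
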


Taking $u_1 = u_2 = .. = u_{n-1} \equiv 0,$ and
$\varphi_n = \varphi_0$ in Theorem \ref{OBExtension}, we obtain
the above result in \cite{14}.

We also provide the following characterization of compactness of
$T^n_{u; \varphi}$ acting from a weighted Bergman space into
$H^{\infty}.$
\newcommand{\CompactExtensiontext}{
 Let $p \in (0, \infty)$ and $\alpha \in [-1, \infty)$ and $n$
  be a positive integer. Let $T = T^n_{u; \varphi}.$ The
  following are equivalent.
\begin{enumerate}
\renewcommand{\theenumi}{\alph{enumi}} 
\item
$T: A^p_\alpha \rightarrow H^\infty$ is compact.
\item For every bounded 
sequence $\{f_k\}$ in $A^p_\alpha$ converging to $0$ uniformly on compact subsets of $\mathbb{D}$, the sequence $\|Tf_k\|_\infty$ approaches $0$ as $k \rightarrow \infty$. 
\item
The weights $u_k \in H^\infty$ and for each $0 \leq k \leq n$ we have
\[
\lim_{|\varphi_k(z)| \rightarrow 1} 
   \frac{|u_k(z)|}{(1-|\varphi_k(z)|^2)^{\frac{2+\alpha}{p} + k}} = 0.
\]
\item Each summand of $T$ is compact.
 \end{enumerate}}
\begin{thrm}\label{CompactExtension}
 \CompactExtensiontext
\end{thrm}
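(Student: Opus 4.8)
The plan is to prove the cycle $(d)\Rightarrow(a)\Rightarrow(b)\Rightarrow(c)\Rightarrow(d)$, with the equivalence $(a)\Leftrightarrow(b)$ being a soft normal-families argument and the two implications touching $(c)$ resting on the sharp pointwise estimate for $A^p_\alpha$ together with the interpolation result described in the introduction. The implication $(d)\Rightarrow(a)$ is immediate: $T$ is the finite sum $\sum_{k=0}^n D^k_{u_k,\varphi_k}$, and a finite sum of compact operators is compact.

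For $(a)\Rightarrow(b)$ I would argue by contradiction. Suppose $\{f_j\}$ is bounded in $A^p_\alpha$ with $f_j\to 0$ uniformly on compact sets, but $\|Tf_j\|_\infty\not\to 0$; passing to a subsequence, compactness of $T$ yields $Tf_{j}\to g$ in $H^\infty$ with $\|g\|_\infty>0$. On the other hand, for each fixed $z$ the points $\varphi_k(z)$ lie in the interior of $\mathbb D$, so uniform convergence on compacta forces $f_j^{(k)}(\varphi_k(z))\to 0$ (derivatives converge uniformly on compacta by the Cauchy estimates), whence $Tf_j(z)=\sum_{k=0}^n u_k(z)\,f_j^{(k)}(\varphi_k(z))\to 0$ pointwise. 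Thus $g\equiv 0$, a contradiction. The reverse $(b)\Rightarrow(a)$ follows the standard pattern: a sequence in the unit ball of $A^p_\alpha$ is locally bounded by the growth estimate, so Montel extracts a subsequence converging uniformly on compacta to some $g\in A^p_\alpha$ (with controlled norm via Fatou); applying $(b)$ to the differences shows the images converge in $H^\infty$, so $T$ carries the unit ball to a relatively compact set.

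The two implications involving $(c)$ rest on the bound $|f^{(k)}(w)|\le C\,\|f\|_{A^p_\alpha}\,(1-|w|^2)^{-(\frac{2+\alpha}{p}+k)}$, valid for all $f\in A^p_\alpha$ and $w\in\mathbb D$. For $(c)\Rightarrow(d)$ I would verify that each summand satisfies $(b)$. Fix $k$ and a bounded sequence $f_j\to 0$ on compacta. Given $\varepsilon>0$, the decay hypothesis supplies $r<1$ so that $|u_k(z)|(1-|\varphi_k(z)|^2)^{-(\frac{2+\alpha}{p}+k)}<\varepsilon$ whenever $|\varphi_k(z)|>r$; on this region the pointwise bound gives $|u_k(z)f_j^{(k)}(\varphi_k(z))|\le C\varepsilon\sup_j\|f_j\|$. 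On the complementary region $|\varphi_k(z)|\le r$ the argument stays in a fixed compact disk, where $f_j^{(k)}\to 0$ uniformly while $u_k$ is bounded (as $u_k\in H^\infty$ by $(c)$), so this part tends to $0$. Letting $\varepsilon\to 0$ gives $\|D^k_{u_k,\varphi_k}f_j\|_\infty\to 0$, hence each summand is compact.

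The crux is $(b)\Rightarrow(c)$, and this is where the interpolation result is essential. Since $(b)\Rightarrow(a)$ makes $T$ bounded, I would first feed into the interpolation theorem, for each fixed $z$, the points $\varphi_0(z),\dots,\varphi_n(z)$ together with unimodular phases chosen so that the numbers $u_k(z)\,f^{(k)}(\varphi_k(z))$ add constructively; this produces a unit-norm $f$ with $\|Tf\|_\infty\ge|Tf(z)|\ge c\sum_{k=0}^n |u_k(z)|(1-|\varphi_k(z)|^2)^{-(\frac{2+\alpha}{p}+k)}$, and boundedness of $T$ forces this sum to be bounded uniformly in $z$, which yields $u_k\in H^\infty$ (since the exponent is nonnegative and $1-|\varphi_k|^2\le 1$). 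For the vanishing condition, suppose some ratio fails to decay: there exist $\delta>0$ and $z_j$ with $|\varphi_k(z_j)|\to 1$ yet $|u_k(z_j)|(1-|\varphi_k(z_j)|^2)^{-(\frac{2+\alpha}{p}+k)}\ge\delta$. Here I would invoke the interpolation theorem to build unit-norm $f_j$ making only the $k$-th derivative large at $\varphi_k(z_j)$ while prescribing the remaining interpolation data to vanish, so that $\|Tf_j\|_\infty\ge|Tf_j(z_j)|\ge c\,\delta$ for every $j$. The main obstacle is verifying that $\{f_j\}$ tends to $0$ uniformly on compacta: because $\varphi_k(z_j)$ approaches $\partial\mathbb D$, the interpolating functions concentrate their mass near the boundary, and the norm bound furnished by the interpolation theorem---uniform in the chosen points---is precisely what guarantees this. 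Once established, $\{f_j\}$ contradicts $(b)$, completing the cycle.
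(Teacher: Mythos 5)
Your overall architecture matches the paper's: the paper gets (a)$\Leftrightarrow$(b) by citing Tjani's lemma (your direct normal-families argument is a fine substitute), proves (c)$\Leftrightarrow$(d) via a summand lemma whose (c)$\Rightarrow$(b) direction is exactly your growth-estimate argument, and derives the decay condition in (c) by feeding the points $\varphi_j(z)$ into Theorem \ref{testfunction}. But there is a genuine gap at the step you yourself identify as the crux. To contradict (b) you must show that the interpolating functions $f_j$ (with only the $k$-th derivative large at $\varphi_k(z_j)$ and the other prescribed derivatives zero) tend to $0$ uniformly on compact subsets, and your justification --- that the norm bound, uniform in the chosen points, ``is precisely what guarantees this'' --- is false as a general principle: the constants $f_j \equiv 1$ have uniformly bounded norm and tend to $0$ nowhere. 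A uniform norm bound gives only normal-family compactness; the locally uniform limit need not vanish, and knowing $f_j^{(m)}(\varphi_m(z_j)) = 0$ for $m \neq k$ at points which may converge to \emph{interior} points does not force the limit to vanish either. What actually closes the gap is the final (``moreover'') assertion of Theorem \ref{testfunction}, and that assertion is a property of the \emph{construction} --- the interpolating function is a linear combination, with uniformly bounded coefficients, of kernels $K_{m,\lambda}$ centered at points pseudo-hyperbolically close to $\lambda_J$, times a uniformly bounded factor, and each such kernel tends to $0$ locally uniformly as its center approaches $\partial\mathbb{D}$ --- not a consequence of norm control. One must even be slightly careful there: the clause as stated assumes $\min_j |\lambda_j| \to 1$, whereas in this application only $\varphi_k(z_j) \to \partial\mathbb{D}$; the version needed (and the one the construction actually delivers, which the paper implicitly uses) is that $f \to 0$ locally uniformly as $|\lambda_J| \to 1$ alone.

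A smaller imprecision: to get $u_k \in H^\infty$ you ask the interpolation theorem to realize all $n+1$ derivative values simultaneously ``with unimodular phases chosen so the terms add constructively,'' but Theorem \ref{testfunction} only produces, for one index $J$ at a time, a function whose $J$-th derivative is large and whose remaining prescribed derivatives vanish. This is easily repaired: apply the theorem separately for each $J$, so that $|Tf(z)| = |u_J(z)|(1-|\varphi_J(z)|^2)^{-\frac{2+\alpha}{p}-J}$ with no cross terms, giving each quotient individually bounded by a constant times $\|T\|$ and hence $u_J \in H^\infty$ (no phases needed); alternatively, sum the $n+1$ individual solutions against phases, using linearity. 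As written, though, the appeal outruns what the theorem states. (The paper sidesteps this entirely, proving $u_k \in H^\infty$ by induction using the monomials $z^k$ as test functions, which is more elementary.)
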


Taking $u_0 = u, u_1 = u_2 = .. = u_{n-1} \equiv 0, u_n = v$ and $\varphi_n = \varphi_0 = \varphi$ in Theorem \ref{CompactExtension}, we obtain Theorem 3.1
from \cite{14}.

The most crucial tool used in the proof of Theorem \ref{OBExtension} and \ref{CompactExtension} is Theorem \ref{testfunction}, the following interpolation theorem in $A^p_\alpha$ which is discussed in section 3.  
\newcommand{\testfunctiontext}{
Let $0 < p \leq \infty$ and $-1 \leq \alpha < \infty$. 
  Let $\lambda_j \in \mathbb{D}$ for $0 \leq j \leq N$ where $N$ is a positive integer.  
  There is some constant $C_{N,p,\alpha}$ depending only on $N$, $p$, and
  $\alpha$ but not on the
$\lambda_j$ such that for each $J$ such that 
$0 \leq J \leq N$ there is a function 
$f$ such that $\|f\|_{p,\alpha} \leq C_{N,p,\alpha}$ and 
\[ 
f^{(J)}(\lambda_J) = \frac{1}{(1-|\lambda_J|^2)^{\frac{2+\alpha}{p} + J}} 
\]
and $f^{(k)}(\lambda_k) = 0$ for $k \neq J$.  
Moreover, as $\min\{|\lambda_j|: 0 \leq j \leq N\} \rightarrow 1$, the function 
$f(z)$ approaches $0$ uniformly when $z$ is restricted to a compact subset of 
$\mathbb{D}$. 
}
\begin{thrm}\label{testfunction}
\testfunctiontext
\end{thrm}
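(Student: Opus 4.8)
The plan is to build $f$ out of the standard normalized Cauchy kernels of $A^p_\alpha$, namely functions of the form
\[
 k^{(s)}_{\lambda}(z) = \frac{(1-|\lambda|^2)^{\,s - \frac{2+\alpha}{p}}}{(1-\bar\lambda z)^{s}}, \qquad s > \tfrac{2+\alpha}{p},
\]
together with finitely many polynomial adjustments. The first step is to record the two features of these kernels that drive everything: by the usual integral estimates for $\int_{\mathbb{D}}(1-|z|^2)^\alpha|1-\bar\lambda z|^{-ps}\,dA(z)$ (and their $\alpha=-1$ and $p=\infty$ analogues), one has $\|k^{(s)}_\lambda\|_{p,\alpha}\asymp 1$ uniformly in $\lambda$; and for $z$ in a fixed compact set, $k^{(s)}_\lambda(z)\to 0$ as $|\lambda|\to 1$, since the numerator tends to $0$ while the denominator stays bounded. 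The latter is exactly what will yield the final assertion that $f\to 0$ on compact subsets of $\mathbb{D}$ once every $|\lambda_j|\to 1$.

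Second, I would remove the degeneracy at the distinguished point. Differentiating $k^{(s)}_\lambda$ produces a factor $\bar\lambda^{J}$, which is useless when $\lambda_J$ is near $0$, so I would first move $\lambda_J$ to the origin by the automorphism isometry
\[
 U_{a}f = (f\circ\sigma_a)\cdot\Big(\tfrac{1-|a|^2}{(1-\bar a z)^{2}}\Big)^{\frac{2+\alpha}{p}}, \qquad \sigma_a(z)=\frac{a-z}{1-\bar a z},
\]
which preserves the $A^p_\alpha$-norm and, crucially, carries a weight that itself tends to $0$ on compacta as $a=\lambda_J\to 1$; this keeps the eventual decay statement intact even if part of the construction is polynomial. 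Writing $f = U_{\lambda_J}F$, the target condition becomes a clean prescription of a single Taylor coefficient of $F$ at $0$, handled by the monomial $z^{J}/J!$, while each remaining condition $f^{(k)}(\lambda_k)=0$ transforms into one linear condition on the $k$-jet of $F$ at $\mu_k=\sigma_{\lambda_J}(\lambda_k)$, with a nonvanishing coefficient on the top-order term.

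Third, I would meet the $N$ vanishing conditions by a successive-correction (Gram--Schmidt-type) scheme: starting from the monomial, subtract suitably scaled kernels $k^{(s)}_{\mu_k}$ to annihilate the unwanted jets one at a time, using the sharp pointwise derivative bound $|g^{(k)}(\lambda)|\le C\,\|g\|_{p,\alpha}\,(1-|\lambda|^2)^{-\frac{2+\alpha}{p}-k}$ to see that each correction coefficient is controlled. Because the prescribed size on the right-hand side is exactly the maximal growth rate of a $k$-th derivative, these coefficients are all of order one, so the resulting linear system has right-hand side of size $O(1)$, and transporting back by $U_{\lambda_J}$ returns the desired $f$ with norm bounded by a universal multiple of $\max_s\|k^{(s)}_\mu\|_{p,\alpha}$.

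Finally, the main obstacle. The correction matrix has uniformly bounded entries, but its invertibility with a bound independent of the $\lambda_j$ fails precisely when points cluster or coincide, since then the kernels attached to nearby points become nearly parallel; this is the heart of the theorem's claim that $C_{N,p,\alpha}$ does not depend on the $\lambda_j$. I expect to resolve it by exploiting that coincident points still carry \emph{distinct} derivative orders: replacing single kernels by families $k^{(s)}_{\mu}$ with $s$ ranging over $N+1$ distinct exponents introduces polynomial factors $s(s+1)\cdots(s+k-1)$ whose Vandermonde determinant is bounded below, so the jet system becomes uniformly solvable in the confluent limit. Concretely, I would first establish the estimate for distinct points with a constant independent of their separation, and then obtain the general (possibly repeated) case by a continuity argument. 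Verifying this uniform conditioning, rather than any single norm computation, is where the real work lies.
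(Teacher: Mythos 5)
Your kernels $k^{(s)}_\lambda$ are, up to relabeling, the paper's functions $K_{n,\lambda}$, and your idea of attaching $N+1$ \emph{distinct} exponents to the $N+1$ conditions so that the system stays solvable when points merge is the same mechanism the paper uses: there the exponents are chosen rapidly increasing so that the normalized coefficient matrix is diagonally dominant, while your observation that in the confluent limit the matrix becomes a factorial--Vandermonde matrix with determinant bounded below is a legitimate alternative for the regime in which all $\lambda_k$ are pseudohyperbolically close to $\lambda_J$ (in that regime the normalized entries do converge uniformly to their confluent values). The genuine gap is the complementary regime: configurations in which some $\lambda_k$ lie at moderate or large pseudohyperbolic distance from $\lambda_J$. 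There the normalized matrix does not converge to anything Vandermonde-like; its off-diagonal entries are merely bounded (by constants that grow exponentially in the exponents), and neither your confluent analysis nor diagonal dominance applies. Your proposed fix --- ``first establish the estimate for distinct points with a constant independent of their separation, then obtain repeated points by continuity'' --- is circular: a bound uniform over all distinct configurations is precisely the statement to be proved, and no continuity or compactness argument can produce it, because the configuration space is not compact and the degenerations (points merging, points separating, points approaching the boundary) are exactly where continuity gives nothing. You concede as much (``verifying this uniform conditioning \ldots is where the real work lies''), which means the decisive step of the proof is deferred rather than carried out.

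For comparison, the paper closes this case with a different tool, not a sharper matrix estimate. By the pigeonhole principle there is a pseudohyperbolic annulus about $\lambda_J$, of width $\epsilon/(N+1)$, containing none of the $\lambda_k$; the far points are replaced by $\lambda_J$ and the clustered-case construction is applied to the modified configuration, producing $g$; then $g$ is multiplied by a uniformly bounded analytic corrector $h$ (built from Blaschke products and a disc automorphism, Lemma \ref{lemma:aux_interpolation}) which equals $1$ to order $N$ at the near points and vanishes to order $N$ at the far points, so that $gh$ satisfies all the conditions. Nothing in your outline plays the role of $h$, and without it the argument does not cover general configurations. Two smaller problems: annihilating the jets ``one at a time'' is not possible, since each correction disturbs the previously satisfied conditions, so you must solve the full linear system --- which is exactly where the conditioning problem lives; and in your $U_{\lambda_J}$-transported picture, kernels attached to points $\mu_k$ near the origin have degenerate diagonal entries (a factor $\bar{\mu}_k^{\,k}\approx 0$), while for $p=\infty$ the automorphism weight is trivial, so the polynomial part of your solution does not decay on compact sets as $|\lambda_J|\to 1$, contradicting the theorem's final assertion in that case.
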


Preliminaries are gathered in the next section. The main results are proved in sections 3 and 4. Throughout the paper, $C$ stands for a positive constant depending on parameters that may change from one line to another. We say two positive functions $f$ and $g$ are equivalent, denoted by $ f \asymp g$, if there exist positive constants $C$ and $\tilde{C}$ independent of the associated variable(s) such that $C f \leq g \leq \tilde{C}f.$ 

\section{Preliminaries}
For $\alpha > -1$ and $0<p<\infty$, the weighted Bergman space $A_{\alpha}^p$ consists of all holomorphic functions $f$ defined on $\mathbb{D}$ such that \begin{equation*}
\|f\|_{p,\alpha}^p = \displaystyle \int_{\mathbb{D}} \left|f \left(z\right)\right|^p dA_{\alpha}\left(z\right) < \infty. \end{equation*}

Here $dA_\alpha(z) = (1+\alpha)(1-|z|^2)^\alpha dA(z)$, and $dA\left(z\right) = \frac{1}{\pi}\,dxdy$ is the normalized Lebesgue measure on the unit disk $\mathbb{D}.$ For $\alpha = -1$, we let $A^p_{-1} = H^p$, the standard Hardy space on
the unit disc. 

The Bergman space $A_\alpha^2$ is a closed subspace of
$L^{2}\left(\mathbb{D} , dA_{\alpha}\right)$ and
thus is a Hilbert space with the same inner product inherited from
$L^{2}\left(\mathbb{D} , dA_{\alpha}\right),$ namely
\begin{equation*}
\langle f , g \rangle_{\alpha} = \displaystyle \int_{\mathbb{D}} \, f\,\overline{g}\,dA_{\alpha}.
\end{equation*}

The reproducing kernel of $A_\alpha^2$, denoted by $K^{\left(\alpha\right)}_{z}$,
satisfies 
$f(z)=\langle f, K^{(\alpha)}_{z}\rangle_{\alpha}$, for all $f\in A_\alpha^2$ and $z\in\mathbb{D}$. 
It has the explicit expression  
\begin{equation*}
K^{\left(\alpha\right)}_{z}\left(w\right) = \dfrac{1}{\left(1-\overline{z}\,w\right)^{2 + \alpha}} \quad z, w \in \mathbb{D}.
\end{equation*}

We now define a certain class of functions that we will use repeatedly in
Section 3 to build our testing functions. These functions are formed by taking 
multiples of derivatives of powers of the reproducing kernel for 
the Bergman space $A^2_\alpha$.  
\begin{definition}\label{def:modified_kernel}
For fixed $0 < p \leq \infty$ and $-1 \leq \alpha < \infty$ define 
\[
K_{n,\lambda}(z) = 
  \frac{(1-|\lambda|^2)^{n+1}}{(1 - \overline{\lambda} z)^{n + 1 + \frac{\alpha}{p} + \frac{2}{p}}}.
\]
\end{definition}
It is very important for the rest of the paper that 
\[
\|K_{n,\lambda}\|_{p,\alpha} \asymp C_{n,p,\alpha}
\]
where $C_{n,p,\alpha}$
is a constant depending only on $n$, $p$, and $\alpha$ but not 
on $\lambda$. This follows from the following lemma 
(see \cite[Theorem 1.7]{Zhu_Ap}). 
\begin{lemma}
Let $-1 < \alpha < \infty$ and let $\beta$ be real and suppose 
that $2 + \alpha + \beta > 0$.  Then
\[
\int_{\mathbb{D}} 
   \frac{(1-|w|^2)^\alpha}{|1-z\overline{w}|^{2+\alpha+\beta}} \, dA(w) < 
C (1-|z|^2)^{-\beta},
\]
and
\[
  \int_{\partial \mathbb{D}}
  \frac{1}{|1-z \overline{w}|^{1+\beta}} \, dm(w) <
  C' (1-|z|^2)^{-\beta}
  \]
where $C$ is a constant depending on $\alpha$ and $\beta$ but not 
on $z$, and $C'$ is a constant depending on $\beta$ but not on $z$
and $dm$ is normalized Lebesgue measure on the unit circle. 
\end{lemma}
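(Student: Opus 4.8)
The plan is to reduce both estimates to a single one-variable power-series asymptotic. Because the measures $dA$ and $dm$ are rotation invariant and $|1 - z\overline{w}|$ is unchanged if $z$ and $w$ are rotated by a common unimodular factor, the change of variable $w \mapsto e^{i\arg z}\,w$ shows that each integral depends on $z$ only through $r = |z|$; so I would assume $z = r \in [0,1)$ from the outset. The substantive content is the growth as $r \to 1^-$, and I would carry out the genuinely interesting case $\beta > 0$ in full (for $\beta \le 0$ the left-hand sides grow only logarithmically or stay bounded, and the stated inequality is read with that understanding).

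For the disk integral, set $c = 2 + \alpha + \beta > 0$ and pass to polar coordinates $w = \rho e^{i\theta}$. The key algebraic step is the binomial expansion $(1 - x e^{i\theta})^{-c/2} = \sum_{n \ge 0} a_n x^n e^{in\theta}$ with $a_n = \Gamma(n + c/2)/(\Gamma(c/2)\, n!) \ge 0$, valid for $x = r\rho < 1$ since $c/2 > 0$. Factoring $|1 - x e^{i\theta}|^{-c} = (1 - x e^{i\theta})^{-c/2}\,\overline{(1 - x e^{i\theta})^{-c/2}}$ and integrating in $\theta$, orthogonality of the exponentials collapses the double sum to $\frac{1}{2\pi}\int_0^{2\pi} |1 - x e^{i\theta}|^{-c}\,d\theta = \sum_{n\ge 0} a_n^2 x^{2n}$. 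Integrating the radial variable next, the substitution $t = \rho^2$ turns $\int_0^1 (1-\rho^2)^\alpha \rho^{2n+1}\,d\rho$ into a Beta integral proportional to $\Gamma(n+1)\Gamma(\alpha+1)/\Gamma(n+\alpha+2)$. Since all terms are nonnegative, Tonelli justifies interchanging the sum and the integrals, giving
\[
\int_{\mathbb{D}} \frac{(1-|w|^2)^\alpha}{|1 - z\overline{w}|^{c}}\,dA(w) \;=\; \sum_{n \ge 0} a_n^2\, \frac{\Gamma(n+1)\Gamma(\alpha+1)}{\Gamma(n+\alpha+2)}\, r^{2n}.
\]
The circle integral is handled identically but without the radial step: with exponent $1 + \beta$ and coefficients $b_n = \Gamma(n + \tfrac{1+\beta}{2})/(\Gamma(\tfrac{1+\beta}{2})\,n!)$, orthogonality yields directly
\[
\int_{\partial\mathbb{D}} \frac{dm(w)}{|1 - z\overline{w}|^{1+\beta}} \;=\; \sum_{n \ge 0} b_n^2\, r^{2n}.
\]

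To finish, I would insert the Stirling estimate for Gamma-ratios, $\Gamma(n+s)/(\Gamma(s)\,n!) \asymp (n+1)^{s-1}$ with constants depending only on $s > 0$. Applied to the two series this gives $a_n^2\,\Gamma(n+1)\Gamma(\alpha+1)/\Gamma(n+\alpha+2) \asymp (n+1)^{(c-2)-(\alpha+1)} = (n+1)^{\beta-1}$ and $b_n^2 \asymp (n+1)^{\beta-1}$, so both integrals are comparable to the single series $\sum_{n\ge 0}(n+1)^{\beta-1} r^{2n}$. The last ingredient is the elementary asymptotic that, for $s > 0$,
\[
\sum_{n\ge 0}(n+1)^{s-1} x^n \asymp (1-x)^{-s} \qquad (0 \le x < 1),
\]
which I would deduce by comparing the coefficients $(n+1)^{s-1}$ with the exact binomial coefficients $\Gamma(n+s)/(\Gamma(s)\,n!)$, whose generating function is $(1-x)^{-s}$. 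Taking $s = \beta$ and $x = |z|^2$ then produces both displayed upper bounds $\lesssim (1-|z|^2)^{-\beta}$, with the advertised dependence of the constants on $\alpha,\beta$ (disk) and on $\beta$ alone (circle).

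I expect the only real friction to be bookkeeping rather than ideas: justifying the Gamma-ratio asymptotics uniformly in $n$, and correctly identifying the exponent $\beta - 1$ in both series. The points deserving a sentence of care are the threshold cases (the series degenerates to $\log\frac{1}{1-x}$ at $\beta = 0$ and to a bounded quantity for $\beta < 0$) and, for the boundary integral, the fact that the binomial expansion needs $\tfrac{1+\beta}{2} > 0$; for very negative $\beta$ one instead bounds the integrand directly via $|1 - re^{i\theta}| \ge 1 - r$, which makes the integral bounded and hence dominated by $(1-|z|^2)^{-\beta}$ there as well.
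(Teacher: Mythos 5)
Your argument is essentially the standard textbook proof of this estimate, and in that sense it matches the paper: the paper gives no proof of this lemma at all, quoting it from \cite{Zhu_Ap} (Theorem 1.7), and the proof in that reference proceeds exactly as you propose --- binomial expansion of $(1-z\overline{w})^{-c/2}$ with nonnegative coefficients, Parseval/orthogonality in the angular variable, a Beta integral in the radial variable, Stirling's estimate for the Gamma ratios, and the comparison $\sum_{n\ge 0}(n+1)^{s-1}x^n \asymp (1-x)^{-s}$. For $\beta>0$ your proof is complete and correct, and that is the only case the paper ever uses: the lemma is applied with $\beta=(n+1)p>0$ to show that $\|K_{n,\lambda}\|_{p,\alpha}$ is comparable to a constant independent of $\lambda$.

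One point to correct: your closing remark that for very negative $\beta$ the (bounded) circle integral is ``hence dominated by $(1-|z|^2)^{-\beta}$'' is false. For $\beta<0$ the right-hand side $(1-|z|^2)^{-\beta}=(1-|z|^2)^{|\beta|}$ tends to $0$ as $|z|\to 1$, while the integral stays bounded below by a positive constant (on at least half of the circle one has $|1-z\overline{w}|\ge 1$, and similarly the disk integrand is bounded below since $|1-z\overline{w}|\le 2$). The same observation shows that the displayed inequalities, read literally, fail for every $\beta\le 0$; at $\beta=0$ the disk integral grows logarithmically while the right-hand side is constant. This is a defect of the lemma as stated in the paper rather than of your method --- the cited reference correctly separates the three cases $\beta>0$, $\beta=0$, $\beta<0$ --- and your instinct to treat $\beta>0$ as the substantive case was the right one. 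You should simply state that the inequality holds (and is needed) only for $\beta>0$, rather than attempt to salvage it in the other regimes, where no argument can succeed.
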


Also note that the $k^{\text{th}}$ derivative 
\[
K_{n,\lambda}^{(k)}(z) = \left(n+1 + \frac{2+\alpha}{p}\right)_k \, \overline{\lambda}^k 
     \frac{(1-|\lambda|^2)^{n+1}}{(1-\overline{\lambda} z)^{n + 1 + \frac{2+\alpha}{p} +k }} \, ,
\]
where $(a)_b = a(a+1)\cdots(a+b-1)$.

\section{The main interpolation theorem}

We use the following lemma to estimate the norms of our testing functions,
which are found by solving certain systems of linear equations.
The operator norm of an $n \times n$ matrix $A$ is defined by
\begin{equation*}
\|A\| = \displaystyle \sup_{\|x\| = 1} \|Ax\|\, , \, x \in \mathbb{C}^{n},
\end{equation*}
where $\|x\|$ denotes the Euclidean norm of $x$. 
For a basic theory of operator norm of a matrix, see \cite[Chapter 4]{17}. 
\begin{lemma}\label{lemma:inv_det_bound}
Let $A$ be an $n \times n$ matrix with $|\det(A)| \geq D > 0$.  Let 
$\|A\|$ denote the operator norm of $A$.  Then 
\[
\|A^{-1}\| \leq \frac{\|A\|^{n-1}}{D}.
\]
\end{lemma}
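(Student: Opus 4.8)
The plan is to reduce everything to the singular values of $A$, which simultaneously govern the operator norm, the norm of the inverse, and the determinant. Write $\sigma_1 \geq \sigma_2 \geq \cdots \geq \sigma_n$ for the singular values of $A$, i.e. the square roots of the eigenvalues of $A^*A$. Since the hypothesis $|\det(A)| \geq D > 0$ forces $\det(A) \neq 0$, all the $\sigma_i$ are strictly positive, so $A$ is genuinely invertible and the quantity $\|A^{-1}\|$ is well defined.

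First I would record the three standard identities that drive the argument: $\|A\| = \sigma_1$ (the operator norm is the largest singular value), $\|A^{-1}\| = 1/\sigma_n$ (the singular values of $A^{-1}$ are the $1/\sigma_i$, the largest being $1/\sigma_n$), and $|\det(A)| = \prod_{i=1}^n \sigma_i$ (take determinants in a singular value decomposition $A = U\Sigma V^*$ and use $|\det U| = |\det V| = 1$). These may be quoted from a standard reference such as \cite{17}.

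From the determinant identity together with the hypothesis we get $\sigma_1 \sigma_2 \cdots \sigma_n = |\det(A)| \geq D$, hence
\[
\sigma_n \geq \frac{D}{\sigma_1 \sigma_2 \cdots \sigma_{n-1}}.
\]
Since $\sigma_i \leq \sigma_1$ for $1 \leq i \leq n-1$, the denominator satisfies $\sigma_1 \cdots \sigma_{n-1} \leq \sigma_1^{n-1}$, so $\sigma_n \geq D/\sigma_1^{n-1}$. Inverting and invoking the identities for $\|A^{-1}\|$ and $\|A\|$ yields
\[
\|A^{-1}\| = \frac{1}{\sigma_n} \leq \frac{\sigma_1^{n-1}}{D} = \frac{\|A\|^{n-1}}{D},
\]
which is the claim.

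I do not expect a genuine obstacle; the only point needing care is the justification of the three singular-value identities, and these are entirely routine. If one preferred to avoid the singular value decomposition, an alternative is the cofactor route: write $A^{-1} = (\det A)^{-1}\operatorname{adj}(A)$ and bound $\|\operatorname{adj}(A)\|$ by $\|A\|^{n-1}$, noting that each singular value of $\operatorname{adj}(A) = \det(A)\,A^{-1}$ is a product of $n-1$ of the $\sigma_i$, the largest being $\sigma_1 \cdots \sigma_{n-1} \leq \sigma_1^{n-1}$. This gives the same estimate, but the direct singular-value computation is cleaner, so that is the route I would take.
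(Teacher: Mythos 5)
Your proof is correct, but it takes a different route from the paper. You pass through the singular value decomposition, using the three identities $\|A\| = \sigma_1$, $\|A^{-1}\| = 1/\sigma_n$, and $|\det(A)| = \prod_{i=1}^n \sigma_i$, and then the chain $D \leq \sigma_1 \cdots \sigma_n \leq \sigma_1^{n-1}\sigma_n$ gives the bound. The paper instead avoids the SVD entirely: it fixes an arbitrary unit vector $x$, extends it to an orthonormal basis $\{x_1, \ldots, x_n\}$ with $x_1 = x$, and applies Hadamard's inequality \cite[Theorem 7.8.1]{17} to the matrix with columns $Ax_k$, obtaining
\[
D \leq \prod_{k=1}^n \|Ax_k\| \leq \|Ax\|\,\|A\|^{n-1},
\]
so that $\|Ax\| \geq D\|A\|^{1-n}$ for every unit vector, which is equivalent to the stated bound on $\|A^{-1}\|$. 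The two arguments are parallel in spirit --- both say that the determinant is at most the smallest stretch factor times $\|A\|^{n-1}$ --- but they invoke different key facts: your version leans on the existence of the SVD and the three standard identities (all quotable from \cite{17}), while the paper's needs only Hadamard's inequality and is in that sense more self-contained and elementary. Your route arguably makes the structure more transparent, since the quantity being bounded, $1/\sigma_n$, is named explicitly rather than handled through an arbitrary unit vector; the cofactor variant you sketch at the end is also fine but, as you note, less clean.
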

\begin{proof}
It suffices to prove that, for every unit vector $x$, we have 
$\|Ax\| \geq D \|A\|^{1-n}$.  Form an orthonormal basis of 
vectors 
$\{x_1, x_2, \cdots, x_n\}$, where $x_1=x$.
Then Hadamard's inequality \cite[Theorem 7.8.1]{17} shows that 
\[
  D\ \leq \prod_{k=1}^n \|A x_k\| \leq \|Ax\| \|A\|^{n-1},
  \]
which gives the desired result.
\end{proof}

We will now prove a series of lemmas culminating in the main interpolation theorem. The theorem states that given an 
integer $N \geq 0$,
there is a constant $C$ such that 
for 
any points $\lambda_0, \ldots, \lambda_N \in \mathbb{D}$, 
and for any integer $J$ such that $0 \leq J \leq N$
there is a function $f \in A^p_\alpha$ with norm at most $1$ such that 
$f^{(j)}(\lambda_j) = 0$ whenever $j \neq J$ and $0 \leq j \leq N$, but that $f^{(J)}(\lambda_J)$ is, up to a factor of at most $C$,
as large as possible. 

The following lemma is used to prove the main interpolation theorem 
in the case where all the $\lambda_j$ are close to the center of the 
unit disc. 
\begin{lemma}\label{lemma:poly_interpolation}
Let $\lambda_0, \ldots, \lambda_N \in \mathbb{D}$ and 
$w_0, \ldots, w_N \in \mathbb{D}$.  There is a constant 
$C$, depending only on $N$, such that there exists a polynomial 
$p$ with $\|p\|_{H^\infty} \leq C$ and $p^{(n)}(\lambda_n) = w_n$ for each $n$ satisfying $0 \leq n \leq N.$ 
\end{lemma}
\begin{proof}
Let 
\(
p(z) = \sum_{n=0}^N a_n z^n.
\)
We must solve the equation
\[
\begin{bmatrix}
1 &  \lambda_0 & \lambda_0^2   & \cdots & \lambda_0^N \\
0 &     1      & 2 \lambda_1   & \cdots & N\lambda_1^{N-1} \\
0 &     0      &  2            & \cdots & N(N-1)\lambda_2^{N-2} \\
\vdots & \vdots & \vdots & \ddots & \vdots\\
0 & 0 & 0 & \cdots & N! 
\end{bmatrix}
\begin{bmatrix} a_0 \\ a_1 \\ a_2 \\ \vdots \\ a_N \end{bmatrix} = 
\begin{bmatrix} w_0 \\ w_1 \\ w_2 \\ \vdots \\ w_N \end{bmatrix}.
\]
But the determinant of the square matrix is $D = \prod_{n=0}^N n!$.  Also, 
the sum of the absolute values of the above matrix is at most 
\[
\sum_{n=0}^N \sum_{j=n}^N \prod_{k=0}^{n-1} (j-k),
\]
so the operator norm is bounded by a constant depending on $N$ 
times the above displayed 
expression.
(This follows since both the operator norm and the $\ell^1$ norm are
norms on the finite dimensional vector space of $(N+1) \times (N+1)$
square matrices \cite[Section 5.6]{17}).   
Thus, by Lemma \ref{lemma:inv_det_bound}, the operator norm of the inverse 
of the square matrix is at most 
$M^{n-1}/D$, where $M$ is the operator norm of the matrix.
  This proves the result. 
\end{proof}

There are three more lemmas before the main interpolation result.  
The first one is used to prove the second, which is used to prove the 
third.  The third one is used in the proof of the main interpolation lemma 
to deal with $\lambda_j$ that are far from the point $\lambda_J$. 
\begin{lemma}
Let $N \geq 0$. 
Suppose $|z_j| \geq \epsilon$ for $1 \leq j \leq M$.  There is a constant $C$ depending 
only on $\epsilon$ and $M$ and $N$ such that there is a function $f$ analytic in the unit disc such that 
$|f(z)| \leq C$ for all $z \in \mathbb{D}$ and $f^{(n)}(0) = \delta_{0 n}$ for $0 \leq n \leq N$ 
and $f^{(n)}(z_j) = 0$ for $0 \leq n \leq N$ and $1 \leq j \leq M$. 
\end{lemma}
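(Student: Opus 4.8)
The plan is to build $f$ as a product $f = p \cdot g$, where $g$ is a Blaschke-type factor that enforces the vanishing conditions at the $z_j$, and $p$ is a correcting polynomial that adjusts the behavior at the origin. First I would set, with $b_j(z) = \dfrac{z_j - z}{1 - \overline{z_j}\, z}$,
\[
  g(z) = \prod_{j=1}^{M} b_j(z)^{N+1}.
\]
Each $b_j$ is an automorphism of $\mathbb{D}$, so $|g(z)| \leq 1$ for all $z \in \mathbb{D}$, and $g$ has a zero of order at least $N+1$ at every $z_j$; consequently $g^{(n)}(z_j) = 0$ for $0 \leq n \leq N$ and $1 \leq j \leq M$. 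Since $|z_j| \geq \epsilon > 0$, the origin is distinct from every $z_j$, so $g(0) = \prod_j z_j^{\,N+1} \neq 0$.

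Next I would correct the behavior at the origin. Because $g(0) \neq 0$, the reciprocal $h = 1/g$ is holomorphic in a neighborhood of $0$; let $p$ be its Taylor polynomial of degree $N$ at $0$, so that $p(z) g(z) \equiv 1 \pmod{z^{N+1}}$. Setting $f = p g$, multiplication by the polynomial $p$ does not disturb the zeros of $g$, so $f$ still vanishes to order at least $N+1$ at each $z_j$, giving $f^{(n)}(z_j) = 0$ for $0 \leq n \leq N$; and by the choice of $p$ the first $N+1$ Taylor coefficients of $f$ at $0$ are $1, 0, \ldots, 0$, i.e. $f^{(n)}(0) = \delta_{0n}$ for $0 \leq n \leq N$. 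Thus $f$ meets all the interpolation requirements, and it is bounded on $\mathbb{D}$ since $|g| \leq 1$ and $p$ is a polynomial.

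The main point, and the only real obstacle, is to bound $\|f\|_{H^\infty}$ by a constant depending only on $\epsilon$, $M$, and $N$, \emph{uniformly} in the positions of the $z_j$. Since $|f| = |p|\,|g| \leq |p|$, it suffices to bound the coefficients of $p$, i.e. the first $N+1$ Taylor coefficients of $h = 1/g$, via Cauchy's estimates on the circle $|z| = \epsilon/2$. On that circle I would use $|z_j - z| \geq |z_j| - |z| \geq \epsilon/2$ together with $|1 - \overline{z_j} z| \leq 2$ to obtain $|b_j(z)| \geq \epsilon/4$ for each factor, hence a uniform lower bound $|g(z)| \geq (\epsilon/4)^{M(N+1)}$ there. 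This yields $|h| \leq (4/\epsilon)^{M(N+1)}$ on $|z| = \epsilon/2$, and Cauchy's inequality then bounds each coefficient $c_n$ of $p$ by $(4/\epsilon)^{M(N+1)} (2/\epsilon)^N$, so that
\[
  \|f\|_{H^\infty} \leq \|p\|_{H^\infty} \leq \sum_{n=0}^{N} |c_n| \leq (N+1)\,(4/\epsilon)^{M(N+1)}(2/\epsilon)^N =: C.
\]
The crucial feature is that the lower bound on $|g|$ near $0$ depends only on the separation $\epsilon$ (and on $M$ and $N$) and not on where the $z_j$ actually lie, which is precisely what forces the constant $C$ to be independent of the points.
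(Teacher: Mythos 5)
Your proof is correct. The construction is essentially the one in the paper: both arguments write $f$ as (Blaschke product vanishing to high order at the $z_j$) $\times$ (degree-$N$ polynomial chosen so the product has jet $1,0,\ldots,0$ at the origin), and in fact your $p$ and the paper's polynomial are the \emph{same} object, namely the degree-$N$ Taylor polynomial of the reciprocal of the Blaschke product. Where you genuinely differ is in how the crucial uniform bound is obtained. The paper finds the polynomial's coefficients by solving the Leibniz-rule linear system $Ac=e_0$, with $a_{jk}=\binom{j}{k}B^{(j-k)}(0)$, and bounds $\|A^{-1}\|$ via Lemma~\ref{lemma:inv_det_bound} (Hadamard's inequality), the determinant being $B(0)^{N+1}$, which is at least $\epsilon^{N(N+1)M}$ in absolute value. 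You instead note that $1/g$ is holomorphic on $|z|<\epsilon$ and bound its first $N+1$ Taylor coefficients by Cauchy's estimates on the circle $|z|=\epsilon/2$, where $|g|\geq(\epsilon/4)^{M(N+1)}$; both routes exploit exactly the same fact, that the hypothesis $|z_j|\geq\epsilon$ keeps the Blaschke product from being small near the origin. Your version is more elementary and self-contained — it avoids the matrix machinery entirely and produces an explicit constant — whereas the paper's version reuses a lemma it needs anyway for its other interpolation arguments (Lemma~\ref{lemma:poly_interpolation} and the main theorem). One further point in your favor: you give each $z_j$ a zero of multiplicity $N+1$, which is exactly what is required to make $f^{(n)}(z_j)=0$ for all $0\leq n\leq N$; the paper's proof speaks of ``$N$ zeros at each of the $z_j$,'' which is off by one.
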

\begin{proof}
Let $g(z) = \sum_{n=0}^N (c_n/n!) z^n$.  We will let $f(z) = B(z) g(z)$ where 
$B$ is the Blaschke product with $N$ zeros at each of the $z_j$.  
Let $e_0$ be the vector with a $1$ in position $0$ and all other
entries $0$. Let $c$ be the vector of coefficients of 
$g$ and let $A$ be the $(N+1)\times (N+1)$ matrix defined by 
\[
a_{jk} = \begin{cases} \binom{j}{k} B^{(j-k)}(0) &\text{ for $k \leq j$}\\
0 &\text{ for $k > j$}
\end{cases}
\]
for $0 \leq j, k \leq N$. 
We will define $f$ and $g$ so that $Ac = e_0$.
Note that by the Leibniz rule we have that
\[
  (Bg)^{(j)}(0) = \sum_{k=0}^j \binom{j}{k} B^{(j-k)}(0) g^{(k)}(0),
\]
and thus $Ac = e_0$ implies that $f$ satisfies
$f^{(n)}(0) = \delta_{0 n}$. 
The determinant of $A$ is $B(0)^{N+1}$ which is bounded below by 
$\epsilon^{N(N+1)M}$. The operator norm of $A$ is bounded above by a constant independent of 
$\epsilon$ since all the entries are bounded, which
follows from the facts that
$\sup \{|h^{(n)}(0)| : \|h\|_{H^\infty} \leq 1\} < \infty$ and
$B \in H^\infty.$
Thus, by 
Lemma \ref{lemma:inv_det_bound}, the operator norm of $A^{-1}$ is bounded, and thus the 
$c_j$ are bounded by a constant depending only on 
$\epsilon$ and $N$ and $M$.  
\end{proof}

\begin{lemma}
Let $c > 1$ and $\epsilon > 0$ and an integer $N$ be given.  Suppose that $|z_j| < \epsilon$ for 
$1 \leq j \leq J$ and $|w_j| > c \epsilon$ for $1 \leq j \leq M$.  We can find a constant 
$C$ depending only on $\epsilon$, $c$, $J$, $M$, and $N$ such that there is a function $f$ analytic in the unit disc such that $f^{(n)}(z_j) = \delta_{0 n}$ for $0 \leq n \leq N$ and 
$f^{(n)}(w_j) = 0$ for $0 \leq n \leq N$, and 
$|f(z)| \leq C$ for all $z \in \mathbb{D}$.  
\end{lemma}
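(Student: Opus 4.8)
The plan is to mimic the previous lemma's strategy---absorb the far points into a Blaschke product and then correct the behavior at the near points---but to replace the single-point Leibniz system used there by a Hermite interpolation formula that stays uniformly bounded even when the near points cluster together.

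First I would fix a radius $r$ with $\epsilon < r < c\epsilon$ (for instance $r = \tfrac{1}{2}(1+c)\epsilon$, which is admissible because $|w_j|<1$ forces $c\epsilon < 1$ and hence $r<1$), so that the circle $|\zeta| = r$ strictly separates the near points $z_j$ from the far points $w_j$. Let $B$ be the Blaschke product having a zero of order $N+1$ at each $w_j$. Then $|B| \le 1$ on $\mathbb{D}$, and since each $w_j$ satisfies $|w_j| > c\epsilon > r$, the pseudohyperbolic distance from $\{|\zeta| \le r\}$ to the $w_j$ is bounded below by a quantity depending only on $\epsilon$ and $c$; hence $B$ is zero-free on $\overline{D(0,r)}$ with $|B(\zeta)| \ge \delta > 0$ for $|\zeta| \le r$, where $\delta$ depends only on $\epsilon, c, M, N$. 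Writing $f = Bg$, the factor $B$ already forces $f^{(n)}(w_j) = 0$ for $0 \le n \le N$ regardless of $g$, so it remains to choose a bounded analytic $g$ making $f$ flat-$1$ at the near points.

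The near-point conditions $f^{(n)}(z_j) = \delta_{0n}$ say exactly that $f - 1$ should vanish to order $N+1$ at each $z_j$; since $B\psi = 1$ for $\psi := 1/B$ (analytic on $\overline{D(0,r)}$, as $B$ is zero-free there), this is equivalent to requiring $g - \psi$ to vanish to order $N+1$ at each $z_j$. Thus I need the Hermite interpolant $g$ of the fixed analytic function $\psi$ matching its derivatives up to order $N$ at $z_1, \dots, z_J$, and the one real issue is to bound $\|g\|_{H^\infty}$ independently of how the $z_j$ are placed inside $D(0,\epsilon)$. I would produce $g$ through the Hermite--Cauchy representation
\[
g(z) = \frac{1}{2\pi i}\oint_{|\zeta| = r} \frac{\Omega(\zeta) - \Omega(z)}{\Omega(\zeta)}\,\frac{\psi(\zeta)}{\zeta - z}\,d\zeta, \qquad \Omega(z) = \prod_{j=1}^{J}(z - z_j)^{N+1},
\]
whose integrand is analytic in $\zeta$ on $|\zeta| = r$ (the apparent pole at $\zeta = z$ is removable and the zeros of $\Omega$ lie strictly inside), so that $g$ is a polynomial of degree below $J(N+1)$ agreeing with the Hermite data of $\psi$ at the $z_j$.

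The main obstacle---uniform control of $g$ under clustering of the near points---is precisely what this representation resolves, which is why I avoid solving a potentially ill-conditioned linear system. On the contour, $|\Omega(\zeta)| = \prod_j |\zeta - z_j|^{N+1} \ge (r - \epsilon)^{J(N+1)}$ is bounded below because every $z_j$ lies in $D(0,\epsilon)$ while $|\zeta| = r > \epsilon$; the divided-difference kernel $\bigl(\Omega(\zeta) - \Omega(z)\bigr)/(\zeta - z)$ is a polynomial whose coefficients are controlled by those of $\Omega$, hence bounded by a constant depending only on $J, N$ for $|\zeta|, |z| \le 1$; and $|\psi(\zeta)| \le 1/\delta$. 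Together these give $\|g\|_{H^\infty} \le C(\epsilon, c, J, M, N)$, with no dependence on the configuration of the $z_j$. Setting $f = Bg$ then yields $|f| = |B|\,|g| \le \|g\|_{H^\infty} \le C$ on $\mathbb{D}$, while $f - 1 = B(g - \psi)$ vanishes to order $N+1$ at each $z_j$, giving $f^{(n)}(z_j) = \delta_{0n}$ and completing the argument.
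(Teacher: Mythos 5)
Your proof is correct, but it takes a genuinely different route from the paper's. The paper never interpolates at the near points at all: it lets $B$ be the Blaschke product with \emph{simple zeros at the near points} $z_j$, applies the preceding lemma (the single-near-point case, with near point $0$) to the far points $B(w_j)$ --- which stay away from the origin because $|B(w_j)| \geq \left(\epsilon(c-1)/2\right)^J$ --- to get $g$ with $g^{(n)}(0)=\delta_{0n}$ and $g^{(n)}(B(w_j))=0$, and then sets $f = g \circ B$. The chain rule (Fa\`{a} di Bruno) converts the flatness of $g$ at $0$ into $f^{(n)}(z_j)=\delta_{0n}$ simultaneously at all near points, with clustering of the $z_j$ handled automatically since $B$ maps them all to $0$. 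You instead multiply by a Blaschke product vanishing to order $N+1$ at the \emph{far} points and repair the near points by Hermite interpolation of $1/B$, bounded via the Hermite--Cauchy contour formula; your estimates (the lower bound on $|B|$ and on $|\Omega|$ over the circle $|\zeta|=r$ with $\epsilon < r < c\epsilon$, and the coefficient bound on the divided-difference kernel) all check out. The paper's composition trick buys brevity --- a two-line reduction needing only the lower bound on $|B(w_j)|$ --- while your argument buys self-containedness and explicit control: it bypasses both the preceding lemma and the determinant/operator-norm machinery of Lemma \ref{lemma:inv_det_bound} on which that lemma rests, and it resolves the clustering issue structurally rather than by reduction. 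One minor point: your justification that $r<1$ (namely that ``$|w_j|<1$ forces $c\epsilon<1$'') tacitly assumes $M \geq 1$; when $M=0$ the statement is trivial (take $f \equiv 1$), but that case should be dispatched explicitly.
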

\begin{proof}
Let $B$ be the Blaschke product with simple zeros at each $z_j$.  Use the previous lemma 
to find a function $g$ such that 
$g^{(n)}(0) = \delta_{0 n}$ and $g^{(n)}(B(w_j)) = 0$.  The $H^\infty$ norm of 
$g$ is bounded by a constant because 
\[
|B(w_j)| \geq \left( \frac{\epsilon(c-1)}{2}\right)^J.
\]
The function $f(z) = g(B(z))$ now has the required properties.
\end{proof}

Let $\rho$ denote the pseudo-hyperbolic metric, so that 
\[
\rho(z_1,z_2) = \frac{|z_1-z_2|}{|1-\overline{z_1}z_2|}.
\]
The function $\rho$ defines a conformally invariant metric on the unit 
disc (see \cite{D_Ap}).   
\begin{lemma}\label{lemma:aux_interpolation}
  Let $c > 1$ and $\epsilon$ be given.
  Let $J, M \geq 1$ and $N \geq 0$ be integers.
There is a constant
$C$ depending only on $\epsilon$, $c$, $J$, $M$, and $N$ such that if
 $\rho(z_j,z_1) < \epsilon$ for 
  $1 \leq j \leq J$ and $\rho(w_j,z_1) > c \epsilon$ for $1 \leq j \leq M$
then there is a function $f$ analytic in the unit disc such that 
$f^{(n)}(z_j) = \delta_{0 n}$ for $0 \leq n \leq N$ and 
$f^{(n)}(w_j) = 0$ for $0 \leq n \leq N$, and 
$|f(z)| \leq C$ for all $z \in \mathbb{D}$.  
\end{lemma}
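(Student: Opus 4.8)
The plan is to reduce this statement to the previous lemma by a conformal change of variables that carries $z_1$ to the origin. Since $\rho$ is a conformally invariant metric and $\rho(w,0)=|w|$, pseudo-hyperbolic distances measured from $z_1$ become Euclidean distances from $0$ after this change, and the Euclidean version of the conclusion is exactly what the previous lemma supplies. The only genuine work is to verify that the prescribed jet conditions survive the composition; everything else is bookkeeping.

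First I would introduce the disc automorphism $\psi(z) = \frac{z_1 - z}{1 - \overline{z_1}\,z}$, which is an involution of $\mathbb{D}$ with $\psi(z_1) = 0$. By conformal invariance, $\rho(\psi(a),\psi(b)) = \rho(a,b)$ for all $a,b \in \mathbb{D}$, so that $|\psi(z_j)| = \rho(\psi(z_j),0) = \rho(z_j,z_1) < \epsilon$ for $1 \leq j \leq J$ and $|\psi(w_j)| = \rho(w_j,z_1) > c\epsilon$ for $1 \leq j \leq M$. Thus the points $\psi(z_j)$ and $\psi(w_j)$ satisfy precisely the Euclidean hypotheses of the previous lemma, with the same $\epsilon$ and $c$. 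Applying that lemma, I obtain an analytic function $g$ with $|g(z)| \leq C$ on $\mathbb{D}$, where $C$ depends only on $\epsilon$, $c$, $J$, $M$, and $N$, such that $g^{(n)}(\psi(z_j)) = \delta_{0n}$ and $g^{(n)}(\psi(w_j)) = 0$ for $0 \leq n \leq N$. I then set $f = g \circ \psi$. Because $\psi$ maps $\mathbb{D}$ bijectively onto itself, $|f(z)| = |g(\psi(z))| \leq C$ for every $z \in \mathbb{D}$, so the norm bound holds with the same constant.

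The remaining step, which is the only place requiring care, is to confirm that the jet conditions transfer correctly under precomposition by $\psi$. The key fact is that the order of vanishing of a holomorphic function at a point is a conformal invariant: if $h^{(n)}(a) = 0$ for $0 \leq n \leq N$, then $h(z) = (z-a)^{N+1}\tilde{h}(z)$ for some holomorphic $\tilde{h}$, and writing $\psi(z) - a = \psi(z) - \psi(b) = (z-b)\,k(z)$ with $k(b) = \psi'(b) \neq 0$ shows that $h \circ \psi$ vanishes to order at least $N+1$ at the point $b$ with $\psi(b) = a$. Applying this with $h = g$ at the point $w_j$ (since $g$ vanishes to order $N+1$ at $\psi(w_j)$) yields $f^{(n)}(w_j) = 0$ for $0 \leq n \leq N$; applying it with $h = g - 1$ at the point $z_j$ (since $g-1$ vanishes to order $N+1$ at $\psi(z_j)$) shows that $f - 1$ vanishes to order at least $N+1$ at $z_j$, that is, $f^{(n)}(z_j) = \delta_{0n}$ for $0 \leq n \leq N$. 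This establishes all the required interpolation conditions and completes the proof.
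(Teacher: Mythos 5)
Your proof is correct and takes essentially the same route as the paper: both conjugate by a disc automorphism sending $z_1$ to $0$, apply the preceding Euclidean lemma to the images of the points (which satisfy its hypotheses by conformal invariance of $\rho$ and the identity $\rho(w,0)=|w|$), and pull back by composition. Your explicit verification that the jet conditions $f^{(n)}(z_j)=\delta_{0n}$ and $f^{(n)}(w_j)=0$ survive precomposition with $\psi$ (via invariance of the order of vanishing of $g-1$ and $g$) is a detail the paper leaves implicit, and it is argued correctly.
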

\begin{proof}
Let $\varphi$ be a conformal map of the unit disc sending $z_1$ to $0$.  Let $g$ be the function 
found in the previous lemma such that 
$g^{(n)}(\varphi(z_j)) = \delta_{0 n}$ for $0 \leq n \leq N$ and
$1 \leq j \leq J$ and 
$g^{(n)}(\varphi(w_j)) = 0$ for $0 \leq n \leq N$ and $1 \leq j \leq M$, and 
$|g(z)| \leq C$ for all $z \in \mathbb{D}$.  Then 
$f(z) = g(\varphi(z))$ has the required properties, since the pseudo-hyperbolic metric 
is conformally invariant.
\end{proof}

We now come to the main interpolation theorem.  The case where all the 
$\lambda_j$ are small is not difficult.  The next case we consider in the 
proof is the case where all the $\lambda_j$ are close to each other in the 
pseudo-hyperbolic metric.  It is in this case that we use linear 
combinations of the functions 
from Definition \ref{def:modified_kernel} to build the interpolating 
function.  This involves showing that the solution to a certain system 
of linear equations cannot be too large.  The basic idea is to choose the 
functions from Definition \ref{def:modified_kernel} in such a way 
as to make sure the determinant of the matrix of the system is not too small. 
This is done by making the diagonal entries of the matrix much larger 
than the off diagonal entries. 

The last case considered in the proof is if the $\lambda_j$ are not 
necessarily close to $\lambda_J$.  We solve this case by first solving 
a related interpolation problem where all the $\lambda_j$ are close to 
$\lambda_J$, then modifying the solution by using 
Lemma \ref{lemma:aux_interpolation}.

We recall the main interpolation theorem for the sake of the reader. 
\iflabelexists{testfunction}{\setcounter{thrm}{\ref{testfunction}}}{\setcounter{thrm}{3}}
\addtocounter{thrm}{-1}
\begin{thrm} \testfunctiontext \end{thrm}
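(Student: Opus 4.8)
Fix the index $J$ and set $\gamma=(2+\alpha)/p$. My plan is to split into cases according to the size of $|\lambda_J|$, because the prescribed value $(1-|\lambda_J|^2)^{-\gamma-J}$ stays bounded exactly when $\lambda_J$ is bounded away from $\partial\mathbb{D}$. Throughout I would estimate $\|f\|_{p,\alpha}$ either by $\|f\|_{H^\infty}$ (using that $H^\infty$ embeds contractively into $A^p_\alpha$) or by a finite sum of the norms $\|K_{m,\lambda}\|_{p,\alpha}\asymp C_{m,p,\alpha}$. Fix $r_0\in(0,1)$. If $|\lambda_J|\le r_0$, the target is at most $(1-r_0^2)^{-\gamma-J}$, a constant depending only on $N,p,\alpha$; I would invoke Lemma \ref{lemma:poly_interpolation} with the value $1/2$ at index $J$ and $0$ at the other indices and then multiply by twice the target, producing a polynomial of controlled norm. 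This case cannot occur when $\min_j|\lambda_j|\to1$, so it does not affect the final decay assertion.

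The heart of the matter is the case $|\lambda_J|>r_0$ in which all $\lambda_j$ are pseudo-hyperbolically close to $\lambda=\lambda_J$, say $\rho(\lambda_j,\lambda)<\epsilon$. Writing $\delta=1-|\lambda|^2$ and $1-\overline\lambda\lambda_j=\delta\zeta_j$, closeness gives $|\zeta_j|\asymp1$ and $|\zeta_j-1|\le\epsilon/(1-\epsilon)$. I would seek $f=\sum_m c_m K_{m,\lambda}$ and impose the $N+1$ interpolation conditions; the derivative formula for $K_{m,\lambda}$ makes this a linear system $Ac=w$ with $A_{jm}=K_{m,\lambda}^{(j)}(\lambda_j)$. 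Dividing row $j$ by the ($m$-independent) factor $\overline\lambda^{\,j}\delta^{-\gamma-j}\zeta_j^{-(1+\gamma+j)}$ reduces the matrix to $\tilde A_{jm}=(m+1+\gamma)_j\,\zeta_j^{-m}$ and the data to a vector whose only nonzero entry equals $\overline\lambda^{-J}$ (of modulus $\le r_0^{-J}$, since $\zeta_J=1$). As $\epsilon\to0$ we have $\zeta_j\to1$ uniformly, so $\tilde A\to M:=[(m+1+\gamma)_j]$.

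I would then prove that $M$, and each submatrix $M_C=[(m+1+\gamma)_j]_{j,m\in C}$ arising from a subset $C\subseteq\{0,\dots,N\}$, is invertible with determinant bounded below in terms of $N,p,\alpha$ only. The clean way is the identity
\[
(m+1+\gamma)_j=\frac{\Gamma(m+1+\gamma+j)}{\Gamma(m+1+\gamma)},\qquad \Gamma(m+1+\gamma+j)=\int_0^\infty t^{\,j}\,t^{\,m}\,t^{\gamma}e^{-t}\,dt,
\]
which exhibits $M_C$, up to the positive column scalings $\Gamma(m+1+\gamma)^{-1}$, as the Gram matrix of the distinct monomials $\{t^{\,j}:j\in C\}$ in $L^2\big((0,\infty),t^{\gamma}e^{-t}\,dt\big)$; that Gram matrix is positive definite, so $\det M_C>0$, and since there are finitely many subsets the lower bound is uniform. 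Choosing $\epsilon$ small (depending only on $N,p,\alpha$) then forces $|\det\tilde A|\ge\tfrac12|\det M_C|$ and $\|\tilde A\|$ bounded, so Lemma \ref{lemma:inv_det_bound} bounds $\|\tilde A^{-1}\|$, hence the coefficient vector $c$, uniformly, and $\|f\|_{p,\alpha}\le\sum_m|c_m|\,\|K_{m,\lambda}\|_{p,\alpha}$ is controlled. For a general configuration with $|\lambda_J|>r_0$ I would separate scales: since there are only $N$ points besides $\lambda_J$, among the $N+1$ pseudo-hyperbolic annuli $\{c^{i}\epsilon_0\le\rho(\cdot,\lambda_J)<c^{i+1}\epsilon_0\}$, $0\le i\le N$, at least one is empty, which splits the points into \emph{close} (indices $C\ni J$, all within radius $c^N\epsilon_0$) and \emph{far}, separated by the ratio $c$; here $\epsilon_0$ is fixed so small that the construction above is valid out to radius $c^N\epsilon_0$. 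Applying that construction to the close points gives $g$ with the correct derivative data there; taking $h$ from Lemma \ref{lemma:aux_interpolation} with $h^{(n)}=\delta_{0n}$ at the close locations and $h^{(n)}=0$ at the far ones for $0\le n\le N$, the Leibniz rule gives $(gh)^{(j)}(\lambda_j)=g^{(j)}(\lambda_j)$ at each close point and $0$ at each far point, while $\|f\|_{p,\alpha}\le\|h\|_{H^\infty}\|g\|_{p,\alpha}$ stays bounded; set $f=gh$.

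Finally, the decay assertion follows because $\min_j|\lambda_j|\to1$ forces $|\lambda_J|\to1$, placing us in the kernel case; there $f=gh$ with $|c_m|$ and $\|h\|_{H^\infty}$ bounded, and each $K_{m,\lambda}$ carries the factor $(1-|\lambda|^2)^{m+1}\to0$ while $|1-\overline\lambda z|$ stays bounded below on any fixed compact set, so $f\to0$ uniformly on compacta. The step I expect to be the main obstacle is the uniform lower bound on $|\det\tilde A|$ over all admissible configurations, most notably the degenerate ones in which several $\lambda_j$ coincide (repeated points are allowed). Reducing the system to the fixed Gram matrix $M_C$ (equivalently, arranging the kernels so the matrix is strongly diagonally dominant) is precisely what makes this bound uniform in both the configuration and the subset $C$, and controlling it is the technical crux of the argument.
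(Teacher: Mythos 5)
Your proposal is correct, and it follows the paper's overall architecture --- the same case split (small $|\lambda_J|$ handled by Lemma \ref{lemma:poly_interpolation}; all points pseudo-hyperbolically close to $\lambda_J$ handled by solving a linear system in the kernels of Definition \ref{def:modified_kernel} and invoking Lemma \ref{lemma:inv_det_bound}; the general case handled by the empty-annulus pigeonhole argument and multiplication by the function from Lemma \ref{lemma:aux_interpolation}, with the same Leibniz-rule bookkeeping and the same argument for uniform decay on compacta) --- but the mechanism at the technical crux is genuinely different. The paper centers the $k$-th kernel at $\lambda_k$ and chooses a rapidly increasing sequence of exponents $m_k$ satisfying \eqref{eq:mcondition2}, so that after normalization the system matrix is diagonally dominant; Ger\v{s}gorin's theorem then forces $|\det A| \geq 1$. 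You instead center \emph{all} kernels at the single point $\lambda_J$ with fixed small exponents $m \in C$, exploit the exact row scaling that reduces the matrix to $\bigl[(m+1+\gamma)_j\,\zeta_j^{-m}\bigr]$ with $\gamma = (2+\alpha)/p$ and $\zeta_j = (1-\overline{\lambda_J}\lambda_j)/(1-|\lambda_J|^2)$, note $|\zeta_j - 1| \leq \epsilon/(1-\epsilon)$, and obtain the determinant bound by perturbation off the fixed matrix $M_C = \bigl[(m+1+\gamma)_j\bigr]_{j,m \in C}$, whose nondegeneracy follows from your Gram-matrix identity $(m+1+\gamma)_j = \Gamma(m+j+1+\gamma)/\Gamma(m+1+\gamma)$ in $L^2\bigl((0,\infty), t^\gamma e^{-t}\,dt\bigr)$. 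Your route buys a cleaner proof: no recursive construction of the sequence $m_k$, no Ger\v{s}gorin step, no need for the paper's $2^{\pm 1/2}$ distortion estimates (replaced by the one-line bound on $|\zeta_j - 1|$), and repeated points are handled transparently since the fully degenerate configuration \emph{is} the limit matrix $M_C$. What the paper's choice buys is robustness without a limiting argument: diagonal dominance holds for any configuration within the fixed radius, with constants explicit in principle rather than extracted from continuity of the determinant. Two minor patches, at the same level of rigor the paper itself adopts: for $0 < p < 1$ replace $\|f\|_{p,\alpha} \leq \sum_m |c_m| \|K_{m,\lambda}\|_{p,\alpha}$ by its $p$-th power (quasi-norm) version; and your rescaling trick in the small-$|\lambda_J|$ case (interpolating the value $1/2$, then multiplying by twice the target) is actually needed, since Lemma \ref{lemma:poly_interpolation} requires targets in $\mathbb{D}$ --- a point the paper's own one-sentence treatment of that case glosses over.
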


\begin{proof}
\newcommand{\mv}[1]{\widetilde{#1}}

Fix an $R > 0$ to be determined later. 
If $|\lambda_J| \leq R$ then we may use 
Lemma \ref{lemma:poly_interpolation} to find the function $f$ in the 
statement of the theorem.  

For any integer $m$ let $\mv{m} = m + 1 + \frac{\alpha}{p} + \frac{2}{p}.$  
Let $m_n$ be a sequence of integers increasing so rapidly that for each
$k$ we have 
\begin{equation}\label{eq:mcondition2}
\frac{1}{2} (\mv{m_k})_k\mv{m_k}^{1/2-k} > 1 + 2 \sum_{\substack{0 \leq j \leq N \\j \neq k}} (\mv{m_j})_k \mv{m_j}^{1/2-j}.
\end{equation}
To show that there is such a sequence,  
note that the above inequality will hold (assuming the integers are increasing) 
if $m_0 > N$ and 
\[
  \frac{1}{2} \mv{m_k}^{1/2} > 1 + 2 \sum_{\substack{0 \leq j \leq N \\j \neq k}} 2^k
        \mv{m_j}^k \mv{m_j}^{1/2-j}
\]
which is equivalent to
\[
\mv{m_k}^{1/2} > 2 + 4\sum_{\substack{0 \leq j \leq N \\j \neq k}} 2^k \mv{m_j}^{1/2+k-j}.
\]
But this will certainly be the case if 
\[
\mv{m_k}^{1/2} > 2^{N+2}(N-k)+2 + 4\sum_{0 \leq j < k} 2^k \mv{m_j}^{1/2+k-j}.
\]
This will occur if 
\[
m_k^{1/2} > 2^{N+2}(N + 1) + 2^{k+2} k \mv{m_{k-1}}^{1/2+k}.
\]
So we may choose $m_0 = N+1$ and 
$m_k = \left(2^{N+2}(N+1) + 2^{k+2} k \mv{m_{k-1}}^{1/2+k}\right)^2$. 

If $\rho(z,w) < r$ then
$|z-w| < 4r(1-|z|^2) < 8r(1-|z|)$ 
for $r < \sqrt{1/2}$ (see \cite[p.~41]{D_Ap}). 
Thus if $\rho(z,w) < r$ then 
\[
(1-8r)(1-|z|) < 1-|w| < (1+8r)(1-|z|).
\]
Also 
\[
|z| - 8r(1-|z|) < |w| < |z| + 8r(1-|z|).
\]
Add $1$ to both sides of the above inequality to see that 
\[
(1+|z|)(1-8r) < 1+|z| - 8r(1-|z|) < 1 + |w| < 1 + |z| + 8r(1-|z|) 
       < (1 + |z|)(1+8r).
\]
Thus, for any given $n$, by choosing $r$ small enough we can require that 
\begin{equation}\label{eq:ratio}
2^{-1/2} < \left( \frac{1-|z|^2}{1-|w|^2} \right)^{n} < 2^{1/2}.
\end{equation}

Now consider the disc $D = \{w: \rho(z,w) < r\}$ for fixed $z$.
Note that this is in fact a Euclidean disc (see \cite[p.~40]{D_Ap}). 
We will find bounds for $|1-\overline{w}z| = |1-w\overline{z}|$.  
Without loss of 
generality we may assume $z$ is real and positive. Let $w_r$ be the point in 
$D$ closest to $1$, and $w_l$ be the point in $D$ farthest from 
$1$.  Then $w_r$ and $w_l$ are real. 
If we consider the set $\overline{z}D = \{w\overline{z}: w \in D\}$
we see that it is a 
disc as well.  Furthermore, the point closest to $1$ in
$\overline{z}D$ is no 
closer than $w_r^2$ and the point farthest from $1$ is no farther than 
$w_l^2$.  Thus 
$$\displaystyle \inf_{w \in D} 1 - |w|^2 < |1 - \overline{z} w| 
<
\sup_{w \in D} 1 - |w|^2 .$$ This fact, together with
inequality \eqref{eq:ratio}, shows that 
for given $n$, by choosing $r$ small enough we may require that 
\[
2^{-1/2} < \left( \frac{1-|z|^2}{|1-\overline{w}z|} \right)^{n} < 2^{1/2}.
\]

Thus, there is a radius $0 < R < 1$ and a number $\epsilon$ such that if 
$|z| > R$ and $\rho(z,w) < \epsilon$ then 
$2^{-1/2} < |z|^N, |w|^N < 1$ and 
\[
  2^{-1/2} < \left( \frac{1-|z|^2}{1-|w|^2}
      \right)^{m_N + N + 2 + \tfrac{2+\alpha}{p}} < 2^{1/2}
\]
and
\[
  2^{-1/2} < \left(\frac{|1-\overline{w}z|}{1-|z|^2}
     \right)^{m_N + N + 2 + \tfrac{2+\alpha}{p}}
  < 2^{1/2}.
\]

For the rest of the proof we fix $r$ and $\epsilon$. 
Suppose now that $|\lambda_J| > R$ and that 
$\rho(\lambda_J, \lambda_k) < \epsilon$ for $0 \leq k \leq N$. 
Let $f(z) = \sum_{k=0}^N b_k \mv{m_k}^{1/2-k} K_{m_k, \lambda}$ for some 
constants $b_k$. 
We will try to solve the system of equations
\[
\sum_{k=0}^N b_k \mv{m_k}^{1/2-k} K_{m_k, \lambda_k}^{(j)}(\lambda_j) = 
  \delta_{jJ} \frac{1}{(1-|\lambda_j|^2)^{\frac{2+\alpha}{p} + j}}
\]
for $0 \leq j \leq N$. 
Now we may multiply equation $j$ by 
$(1-|\lambda_j|^2)^{\frac{2+\alpha}{p} + j}$ to see that we 
need to solve a system of the form $A b = e_J$. 
Here $A$ is given by
\[
a_{jk} = 
(\mv{m_k})_j \mv{m_k}^{1/2-k} \frac{(1-|\lambda_j|^2)^{\frac{2+\alpha}{p} + j} 
               (1-|\lambda_k|^2)^{m_k+1} \, \overline{\lambda_j}^j}
{(1-\overline{\lambda_k}\lambda_j)^{m_k + 1 + \frac{\alpha}{p} + \frac{2}{p} + j}}
\]
and $e_J$ is the vector with all $0$ entries except for a $1$ at position $J$. 
Now consider the matrix
\[
\widehat{A} = 
\begin{bmatrix}
\mv{m_0}^{1/2} & \mv{m_1}^{-1/2} & \mv{m_2}^{-3/2} & \cdots & \mv{m_N}^{1/2-N} \\
(\mv{m_0})_1 \mv{m_0}^{1/2} & (\mv{m_1})_1 \mv{m_1}^{-1/2} & (\mv{m_2})_1 \mv{m_2}^{-3/2} & \cdots & (\mv{m_N})_1 \mv{m_N}^{1/2-N} \\
(\mv{m_0})_2 \mv{m_0}^{1/2} & (\mv{m_1})_2 \mv{m_1}^{-1/2} & (\mv{m_2})_2 \mv{m_2}^{-3/2} & \cdots & (\mv{m_N})_2 \mv{m_N}^{1/2-N} \\
\vdots & \vdots & \vdots &\ddots & \vdots \\
(\mv{m_0})_N \mv{m_0}^{1/2} & (\mv{m_1})_N \mv{m_1}^{-1/2} & (\mv{m_2})_N \mv{m_2}^{-3/2} & \cdots & (\mv{m_N})_N \mv{m_N}^{1/2-N} \\
\end{bmatrix}
\]
Then each entry in the diagonal of $A$ differs in absolute value 
by at most a factor of between $1/2$ and 
$2$ from the corresponding 
entry in $\widehat{A}$.  Also, each off diagonal entry in $A$ can be at most twice as large 
as the corresponding entry in $\widehat{A}$.  
The conditions in the inequalities \eqref{eq:mcondition2}
show that the matrix $A$ is diagonally dominant.  
In fact, for each $j$, we have 
\[
1 + \sum_{k \neq j} a_{jk} < a_{jj}.
\]
By Ger\v{s}gorin's theorem \cite[Theorem 6.1.1]{17}, 
all eigenvalues of $A$ are larger than $1$ in absolute value.
Thus the determinant of $A$ is at least $1$ in absolute value. 
This implies that 
the matrix $A$ is invertible and Lemma \ref{lemma:inv_det_bound}
implies that the norm of $A^{-1}$ is bounded by a constant, since 
each entry of $A$ is bounded by a constant. (All of the constants depend 
upon the choice of the sequence $m_k$).   
Therefore, each entry in $b$ is bounded by a constant and 
$f$ has norm that is bounded by a constant.

For the general case where $|\lambda_J| > R$,  let $\epsilon' = \epsilon/(N+1)$.  
By the pigeonhole principle, there must be a number $L$ such that 
$1 \leq L \leq N+1$ and such that the set 
$\{\lambda_k : L\epsilon' < \rho(\lambda_J,\lambda_k) < (L+1)\epsilon'\}$ is 
empty. 

Define the numbers 
\[
z_k = \begin{cases} \lambda_k &\text{ if $\rho(\lambda_J,\lambda_k) < 
         L \epsilon'$}\\
                                 \lambda_J &\text{ if $\rho(\lambda_J,\lambda_k) > (L+1) \epsilon'$}
\end{cases}
\]
Let $\{w_0, \dotsc, w_{N'}\}$ be the set $\{\lambda_0, \dotsc, \lambda_N\}$ with the numbers 
$\{z_1, \ldots, z_N\}$ removed.  Construct a function as above for the 
sequence $\{z_0, \dotsc, z_N\}$; call it $g$.  Construct a function $h$ 
for the sets $\{z_0, \dotsc, z_N\}$ and  $\{w_0, \dotsc, w_{N'}\}$ as in 
Lemma \ref{lemma:aux_interpolation}.
The function $g(z) h(z)$ now has the required properties.
The assertion about the uniform convergence follows since 
$K_{n, \lambda}(z) \rightarrow 0$ as $|\lambda| \rightarrow 1$, uniformly for 
$z$ in compact subsets of $\mathbb{D}$, and the fact that the constants 
$b_k$ and the supremum norm of $h(z)$ are uniformly bounded independently 
of the $\lambda_j$. 
\end{proof}

The significance of the value of $f^{(j)}(\lambda_J)$ is shown by 
the following lemma, found in Lemma 2.1 of \cite{14}, which shows that it is (up to a constant) as large as possible for a function of bounded $A^p_\alpha$ norm. 
\begin{lemma}\label{lemma:growthbound}
Let $0 < p < \infty$ and $-1 \leq \alpha < \infty$, let $n$ be a 
nonnegative integer and let $z \in \mathbb{D}$.  Then 
\[
\sup \left\{ |f^{(n)}(z)|: \|f\|_{p,\alpha} \leq 1 \right\} 
\asymp 
\frac{1}{(1-|z|^2)^{\frac{2 + \alpha}{p} + n}}.
\]
\end{lemma}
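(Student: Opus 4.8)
The plan is to prove the equivalence as two separate inequalities: an upper bound valid for \emph{every} $f$ with $\|f\|_{p,\alpha}\le 1$, and a matching lower bound obtained by exhibiting a single admissible test function for each $z$. The upper bound is the standard growth estimate for these spaces, while the lower bound is precisely where the functions of Definition \ref{def:modified_kernel} come in.

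For the upper bound I would first treat $n=0$. Since $|f|^p$ is subharmonic, the sub-mean-value property over the Euclidean disc $D(z,\rho)$ with $\rho=(1-|z|)/4$ gives $|f(z)|^p\le \rho^{-2}\int_{D(z,\rho)}|f|^p\,dA$. On this disc $1-|w|^2\asymp 1-|z|^2$ with constants independent of $z$, so $(1-|w|^2)^\alpha\asymp(1-|z|^2)^\alpha$ and hence $\int_{D(z,\rho)}|f|^p\,dA\lesssim (1-|z|^2)^{-\alpha}\|f\|_{p,\alpha}^p$. Combining and taking $p$-th roots yields $|f(z)|\lesssim (1-|z|^2)^{-(2+\alpha)/p}\|f\|_{p,\alpha}$. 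To pass to the $n$-th derivative I would apply the Cauchy integral formula on the circle $|w-z|=\rho$, where again $1-|w|^2\asymp 1-|z|^2$, so the $n=0$ bound controls $\max_{|w-z|=\rho}|f(w)|$; then $|f^{(n)}(z)|\le n!\,\rho^{-n}\max_{|w-z|=\rho}|f(w)|\lesssim (1-|z|^2)^{-(2+\alpha)/p-n}\|f\|_{p,\alpha}$, using $\rho\asymp 1-|z|^2$.

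For the lower bound I would split on the size of $|z|$. When $|z|\ge 1/2$ I take $f=K_{n,z}/\|K_{n,z}\|_{p,\alpha}$; by the norm equivalence $\|K_{n,z}\|_{p,\alpha}\asymp C_{n,p,\alpha}$ this is admissible, and the derivative formula recorded after Definition \ref{def:modified_kernel} gives, at $\lambda=z$ and $k=n$, the value $|K_{n,z}^{(n)}(z)|=(n+1+\tfrac{2+\alpha}{p})_n\,|z|^n\,(1-|z|^2)^{-(2+\alpha)/p-n}$, which is $\gtrsim (1-|z|^2)^{-(2+\alpha)/p-n}$ since $|z|^n\ge 2^{-n}$. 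When $|z|<1/2$ the right-hand side is bounded above and below by constants, so it suffices to use the single fixed function $w\mapsto w^n$, normalized in $A^p_\alpha$: its $n$-th derivative is the constant $n!$, which dominates the (now bounded) right-hand side uniformly on $|z|<1/2$.

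The only genuinely delicate point is the boundary case $\alpha=-1$, where $A^p_{-1}=H^p$ and the area-measure subharmonicity argument for the $n=0$ estimate is unavailable; there I would instead invoke the classical Hardy-space point-evaluation bound $|f(z)|\lesssim(1-|z|^2)^{-1/p}\|f\|_{H^p}$ and then run the same Cauchy-formula step verbatim. Everything else is routine bookkeeping: the weight comparison $(1-|w|^2)^\alpha\asymp(1-|z|^2)^\alpha$ on $D(z,\rho)$ and the comparison $\rho\asymp 1-|z|^2$ hold with constants depending only on $\alpha$ and $n$, and the test-function computation uses only the already-recorded derivative formula and norm equivalence for $K_{n,\lambda}$.
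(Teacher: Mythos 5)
Your proof is correct, but it is worth noting that the paper itself does not prove this lemma at all: it is quoted directly from Lemma 2.1 of the Colonna--Sharma paper \cite{14}, so there is no internal argument to compare against. What you have written is a complete, self-contained proof of the standard kind, and it hangs together: the upper bound via subharmonicity of $|f|^p$ on the disc $D(z,(1-|z|)/4)$, where $1-|w|^2\asymp 1-|z|^2$, followed by the Cauchy estimate on the circle $|w-z|=\rho$ with $\rho\asymp 1-|z|^2$, is the classical growth argument; and your lower bound is pleasantly economical in that it reuses exactly the machinery the paper has already set up, namely the functions $K_{n,\lambda}$ of Definition \ref{def:modified_kernel}, their derivative formula, and the norm bound $\|K_{n,\lambda}\|_{p,\alpha}\lesssim C_{n,p,\alpha}$ (note you only need the upper bound on the norm, which is precisely what the paper's Lemma 2.2 supplies, including the circle-integral version needed when $\alpha=-1$). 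Your splitting at $|z|=1/2$ to handle the vanishing of the factor $|\overline{z}|^n$ near the origin, and your separate treatment of the Hardy case $\alpha=-1$ via the classical bound $|f(z)|\lesssim (1-|z|^2)^{-1/p}\|f\|_{H^p}$, close the two places where a careless version of this argument would genuinely break. So your route gives the paper something it currently outsources: a proof whose constants visibly depend only on $n$, $p$, $\alpha$, using only results already stated in Section 2.
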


\section{Order boundedness and compactness of $T^n_{u; \varphi}$}
Let $T^n_{u; \varphi} = \displaystyle \sum_{k=0}^{n} D^{k}_{u_{k}, \varphi_{k}}$,
as in the introduction.
We say that an operator $T: A^p_\alpha \rightarrow A^q_\beta$
is order bounded
if there is a function $h \in L^q(\mathbb{D}, dA_\beta)$ such that
$|Tf(z)| \leq h(z)$ for all $f \in A^p_\alpha$ such that
$\|f\|_{p,\alpha} < 1$ and for all $z \in \mathbb{D}$. 

The notion of order boundedness is closely related to boundedness.
In fact, it can be easily checked that any order bounded operator $T:A^{p}_{\alpha} \rightarrow A^{q}_{\beta}$ is bounded. Here $p,q \in (0,\infty], \alpha \in [-1,\infty),$ and $\beta \in (-1,\infty).$
The order boundedness of composition operators was first studied by Hunziker and Jarchow in \cite{18}. For more recent work on order bounded composition and weighted composition operators, we refer to \cite{19,20,21,22,23,24,25}.
The main interpolation result, Theorem \ref{testfunction},
is closely related to theorems about the order boundedness and compactness of sums of weighted derivatives of 
composition operators.  

We begin with the following lemma that combines Propositions 2.1 and 2.2 in \cite{14}.
\begin{lemma}\label{lemma:orderbound}
Let $(p,q) \in (0,\infty)$ and $\alpha \in [-1,\infty)$ and 
$\beta \in (-1, \infty)$. Let $n$ be a nonnegative integer. Then the operator $f(z) \mapsto u(z) f^{(n)}(\varphi(z))$ is order bounded from $A^p_\alpha$ to $A^q_\beta$ if and only if 
\[
\int_{\mathbb{D}} \frac{|u(z)|^q}{(1-|\varphi(z)|^2)^{(2+\alpha)\frac{q}{p} + nq}} 
  \, dA_\beta(z) < \infty.
\]
\end{lemma}

We are now ready to prove Theorem \ref{OBExtension}.   
We restate it for the reader's convenience.
\iflabelexists{OBExtension}{\setcounter{thrm}{\ref{OBExtension}}}{\setcounter{thrm}{1}}
\addtocounter{thrm}{-1}
\begin{thrm}\OBExtensiontext\end{thrm}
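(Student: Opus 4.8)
The plan is to establish (b)$\Leftrightarrow$(c), (b)$\Rightarrow$(a), and (a)$\Rightarrow$(c); together these close the cycle and give the full equivalence. The equivalence (b)$\Leftrightarrow$(c) is immediate from Lemma \ref{lemma:orderbound}, applied to the individual summand $D^k_{u_k,\varphi_k}$ (taking $u=u_k$, $\varphi=\varphi_k$, and the order of differentiation to be $k$): the lemma states precisely that this summand is order bounded from $A^p_\alpha$ to $A^q_\beta$ if and only if the integral in (c) is finite. The implication (b)$\Rightarrow$(a) is routine: if each $D^k_{u_k,\varphi_k}$ is dominated by some $h_k \in L^q(\mathbb{D}, dA_\beta)$, then for $\|f\|_{p,\alpha} < 1$ we have $|T^n_{u;\varphi}f(z)| \leq \sum_{k=0}^n |u_k(z) f^{(k)}(\varphi_k(z))| \leq \sum_{k=0}^n h_k(z)$, and a finite sum of $L^q$ functions lies in $L^q$ for every $q > 0$, so $T^n_{u;\varphi}$ is order bounded.

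The heart of the argument is (a)$\Rightarrow$(c), where the interpolation theorem is used to decouple the summands. Suppose $T^n_{u;\varphi}$ is order bounded, with dominating function $h \in L^q(\mathbb{D}, dA_\beta)$. Fix an index $J$ with $0 \leq J \leq n$ and a point $z \in \mathbb{D}$. Applying Theorem \ref{testfunction} with $N=n$ and nodes $\lambda_k = \varphi_k(z)$ for $0 \leq k \leq n$, I obtain a function $f_{z,J}$ with $\|f_{z,J}\|_{p,\alpha} \leq C_{n,p,\alpha}$ satisfying $f_{z,J}^{(k)}(\varphi_k(z)) = 0$ for $k \neq J$ and
\[
f_{z,J}^{(J)}(\varphi_J(z)) = \frac{1}{(1-|\varphi_J(z)|^2)^{\frac{2+\alpha}{p} + J}}.
\]
Since every derivative in $T^n_{u;\varphi}f_{z,J}(z) = \sum_{k=0}^n u_k(z) f_{z,J}^{(k)}(\varphi_k(z))$ vanishes except the $J$-th, the single summand $D^J_{u_J,\varphi_J}$ is isolated at the point $z$, giving
\[
|T^n_{u;\varphi}f_{z,J}(z)| = \frac{|u_J(z)|}{(1-|\varphi_J(z)|^2)^{\frac{2+\alpha}{p} + J}}.
\]

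To conclude, I would normalize: the function $g_{z,J} = f_{z,J}/C_{n,p,\alpha}$ has $\|g_{z,J}\|_{p,\alpha} \leq 1$, so the defining inequality of order boundedness, evaluated at the single point $w=z$ (valid since $|T^n_{u;\varphi}g(w)| \leq h(w)$ holds pointwise in $w$ for every $g$ in the unit ball, even when $g$ depends on $z$), yields
\[
\frac{|u_J(z)|}{(1-|\varphi_J(z)|^2)^{\frac{2+\alpha}{p} + J}} \leq C_{n,p,\alpha}\, h(z).
\]
Raising to the $q$-th power, using $\bigl(\tfrac{2+\alpha}{p} + J\bigr)q = (2+\alpha)\tfrac{q}{p} + Jq$, and integrating against $dA_\beta$ gives
\[
\int_{\mathbb{D}} \frac{|u_J(z)|^q}{(1-|\varphi_J(z)|^2)^{(2+\alpha)\frac{q}{p} + Jq}}\, dA_\beta(z) \leq C_{n,p,\alpha}^q \int_{\mathbb{D}} h(z)^q\, dA_\beta(z) < \infty,
\]
which is (c) for $k=J$; since $J$ was arbitrary, (c) holds for all $k$. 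The main obstacle is exactly this decoupling step: the summands are evaluated at the distinct points $\varphi_k(z)$ and combined into a single sum, so without the interpolation theorem one cannot separate their contributions at a common $z$. The decisive feature of Theorem \ref{testfunction} is that the isolating test function has norm bounded by a constant independent of the nodes, which is what lets the pointwise bound be integrated to a finite quantity.
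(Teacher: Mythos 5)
Your proposal is correct and takes essentially the same approach as the paper: reduce everything to (a)$\Rightarrow$(c) using Lemma \ref{lemma:orderbound} and the fact that a sum of order bounded operators is order bounded, then apply Theorem \ref{testfunction} with $N=n$ and $\lambda_k = \varphi_k(z)$ to isolate the $J$-th summand pointwise, and integrate the resulting bound against $dA_\beta$. If anything, your write-up is slightly more careful than the paper's, since you make the normalization by $C_{n,p,\alpha}$ explicit and note why the pointwise order-boundedness inequality may be applied even though the test function depends on the evaluation point $z$.
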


\begin{proof}%
Due to Lemma \ref{lemma:orderbound} and the fact that a sum of order bounded operators is order bounded, it is sufficient to prove that (a) implies (c). 

Assume (a) holds. Let $T = T^n_{u; \varphi}.$ Then there is some nonnegative function 
$h \in L^q (\mathbb{D}, A_{\beta})$ such that $|Tf(z)| \leq h(z)$ for all 
$f$ of norm at most one in $A^p_\alpha$, and for all $z \in \mathbb{D}$. 

Let $z \in \mathbb{D}$. 
Let $0 \leq J \leq n$ be given and let $f$ be the corresponding
function from Theorem \ref{testfunction} with $N=n$ and 
$\lambda_k = \varphi_k(z)$.  
Then 
\[
|T f(z)| = \left|\frac{u(z)}{(1-|\varphi_J(z)|^2)^{\frac{2 + \alpha}{p} + J}} \right| 
\leq Ch(z) 
\]
for some constant $C$ not depending on $z$ or $J$.  This implies that 
\[
\|T f\|_{q,\beta} = 
\int_{\mathbb{D}} \frac{|u_J(z)|^q}{(1 - |\varphi_J(z)|^2)^{(2+\alpha)\frac{q}{p} + Jq}} dA_\beta (z) \leq \|h\|_{L^q (\mathbb{D}, A_{\beta})}^q < \infty.
\]
\end{proof}
We now discuss compactness of the operators $T^n_{u; \varphi}.$ We first prove a lemma about the compactness of a summand of such an operator. 
\begin{lemma}\label{compact_summand}
  Let $p \in (0,\infty)$ and $\alpha \in [-1,\infty)$.
  Let $u \in H(\mathbb{D})$ and $\varphi$ be an analytic self map of $\mathbb{D}.$  
Then the following are equivalent
\begin{enumerate}
\renewcommand{\theenumi}{\alph{enumi}} 
\item
The
operator $T: f(z) \mapsto u(z) f^{(n)}(\varphi(z))$ is 
compact from $A^p_\alpha$ to $H^\infty$.
\item For every bounded 
sequence $\{f_k\}$ in $A^p_\alpha$ converging to $0$ uniformly on compact subsets of $\mathbb{D}$, the sequence $\|Tf_k\|_\infty$ approaches $0$ as $k \rightarrow \infty$. 
\item
The function
$u \in H^\infty$ and 
\[
\lim_{|\varphi(z)| \rightarrow 1} 
   \frac{|u(z)|}{(1-|\varphi(z)|^2)^{\frac{2+\alpha}{p} + n}} = 0.
\]
\end{enumerate}
\end{lemma}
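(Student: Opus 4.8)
The plan is to establish the two routine equivalences (a) $\Leftrightarrow$ (b) by elementary functional analysis, and then to close the loop through (b) $\Rightarrow$ (c) and (c) $\Rightarrow$ (b), the former being the substantive step where the interpolation theorem enters. For (a) $\Rightarrow$ (b) I would argue by contradiction: if $\{f_k\}$ is bounded in $A^p_\alpha$, tends to $0$ uniformly on compact subsets of $\mathbb{D}$, but $\|Tf_k\|_\infty \not\to 0$, then some subsequence satisfies $\|Tf_{k_j}\|_\infty \geq \delta > 0$. Compactness forces a further subsequence with $Tf_{k_{j_i}}$ convergent in $H^\infty$; but the Cauchy estimates show $f_k^{(n)} \to 0$ at the fixed point $\varphi(z)$ for each $z$, so $Tf_k \to 0$ pointwise and the $H^\infty$-limit must be $0$, contradicting $\|Tf_{k_j}\|_\infty \geq \delta$. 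For (b) $\Rightarrow$ (a), given a bounded sequence $\{g_k\}$, the pointwise growth bound (the $n=0$ case of Lemma \ref{lemma:growthbound}) makes $\{g_k\}$ uniformly bounded on compact sets, so Montel's theorem yields a locally uniformly convergent subsequence $g_{k_j} \to g$; Fatou gives $g \in A^p_\alpha$, and then $h_j = g_{k_j} - g$ is bounded and tends to $0$ uniformly on compacta, so (b) gives $\|Tg_{k_j} - Tg\|_\infty \to 0$. Hence $T$ is compact, and in particular bounded.

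For (c) $\Rightarrow$ (b) I would use Lemma \ref{lemma:growthbound} to write, for $\|f_k\|_{p,\alpha} \leq M$,
\[
|u(z)|\,|f_k^{(n)}(\varphi(z))| \leq CM \frac{|u(z)|}{(1-|\varphi(z)|^2)^{\frac{2+\alpha}{p}+n}},
\]
and then split the supremum over $z$ according to whether $|\varphi(z)|$ is close to $1$ or not. On the region where $|\varphi(z)|$ is close to $1$ the right-hand side is uniformly small by the limit hypothesis in (c); on the complementary region $\varphi(z)$ stays in a fixed compact disc, $|u(z)| \leq \|u\|_\infty$, and $f_k^{(n)} \to 0$ uniformly there by the Cauchy estimates. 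Letting $k \to \infty$ first and then enlarging the first region gives $\|Tf_k\|_\infty \to 0$.

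The heart of the matter is (b) $\Rightarrow$ (c). Since (b) implies (a) and hence that $T$ is bounded, applying $T$ to the monomial $z^n/n!$, whose $n$-th derivative is identically $1$, gives $u = T(z^n/n!) \in H^\infty$. For the limit condition I would argue contrapositively: if it fails there are $\delta > 0$ and points $z_k$ with $|\varphi(z_k)| \to 1$ and $|u(z_k)|(1-|\varphi(z_k)|^2)^{-(\frac{2+\alpha}{p}+n)} \geq \delta$. Taking $N = n$, $J = n$, and $\lambda_0 = \cdots = \lambda_n = \varphi(z_k)$, Theorem \ref{testfunction} supplies functions $f_k$ of uniformly bounded norm with $f_k^{(n)}(\varphi(z_k)) = (1-|\varphi(z_k)|^2)^{-(\frac{2+\alpha}{p}+n)}$, and its ``moreover'' clause guarantees $f_k \to 0$ uniformly on compacta since $\min_j |\lambda_j| = |\varphi(z_k)| \to 1$. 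Then $\|Tf_k\|_\infty \geq |Tf_k(z_k)| = |u(z_k)|(1-|\varphi(z_k)|^2)^{-(\frac{2+\alpha}{p}+n)} \geq \delta$ for all $k$, contradicting (b).

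I expect the main obstacle to be entirely absorbed into the construction of test functions that are simultaneously norm-bounded, attain the extremal size of the $n$-th derivative at the prescribed point, and decay uniformly on compact sets as the points approach the boundary; this is exactly the content of Theorem \ref{testfunction}, and once it is in hand the remaining bookkeeping is routine, apart from minor care with the quasi-norm triangle inequality when $p < 1$ in the Fatou/Montel step. One could in fact bypass the full interpolation theorem in this single-summand setting and use the kernel $K_{n,\varphi(z_k)}$ of Definition \ref{def:modified_kernel} directly, since no vanishing conditions are required here.
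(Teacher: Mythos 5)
Your proposal is correct, and its overall architecture (standard (a)$\Leftrightarrow$(b) equivalence, splitting argument for (c)$\Rightarrow$(b), test functions concentrated at $\varphi(z_k)$ for the hard direction) matches the paper's. There are two points of genuine divergence worth noting. First, for (a)$\Leftrightarrow$(b) the paper simply cites Tjani's compactness criterion \cite[Lemma 3.7]{26}, whereas you reprove it from scratch via Montel's theorem and Fatou's lemma; your version is self-contained (and your remark about the quasi-norm when $p<1$ is the right thing to watch), but it is the standard proof of exactly that cited lemma. Second, and more substantively, for (b)$\Rightarrow$(c) you invoke the full interpolation theorem (Theorem \ref{testfunction}) with $N=n$, $J=n$, and all nodes equal to $\varphi(z_k)$ — heavier machinery than the paper uses, since the paper takes $f_k = K_{0,\varphi(z_k)}$ from Definition \ref{def:modified_kernel} directly and reads off
\[
\|Tf_k\|_\infty \geq |Tf_k(z_k)| \geq \frac{C\,|u(z_k)|\,|\varphi(z_k)|^n}{(1-|\varphi(z_k)|^2)^{\frac{2+\alpha}{p}+n}},
\]
the vanishing conditions at lower-order derivatives being irrelevant for a single summand. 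You identify precisely this shortcut in your closing remark, so you have both routes in hand; the kernel route is what makes this lemma independent of the interpolation theorem (which the paper reserves for the genuinely multi-summand Theorem \ref{CompactExtension}), while your route has the mild advantage of uniformity of presentation with that later proof. One small improvement over the paper: you evaluate the test function's image at $z_k$, i.e.\ $|Tf_k(z_k)|$, which is the correct point of evaluation (the paper's displayed formula writes $|Tf_k(\varphi(z_k))|$, evidently a typo), and your $u \in H^\infty$ step via $T(z^n/n!)$ is the same monomial argument as the paper's.
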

\begin{proof}
The equivalence of (a) and (b) 
follows from \cite[Lemma 3.7]{26}. To prove (c) implies (b), we use some of the methods from the proof of \cite[Theorem 3.1]{14}. First, suppose that (c) holds. There is thus a number $0 < r < 1$ such that if $|\varphi(z)| > r$ then 
\[
\frac{|u(z)|}{(1-|\varphi(z)|^2)^{\frac{2+\alpha}{p} + n}} < \epsilon.
\]
Thus for $|\varphi(z)| > r$ we have by Lemma \ref{lemma:growthbound} that 
\[
|Tf(z)| = |u(z) f^{(n)}(\varphi(z))| 
\leq
C \|f\|_{p,\alpha} \frac{|u(z)|}{(1-|\varphi(z)|^2)^{\frac{2 + \alpha}{p} + n}}
< C \|f\|_{p,\alpha} \epsilon.
\]
Now if $f_k$ is a sequence of analytic functions with 
$\|f_k\|_{p,\alpha} < A$ and $f_k \rightarrow 0$ uniformly on compact 
subsets of $\mathbb{D},$ then for $|w|\leq r,$ we have that 
$|f_k^{(n)}(w)| < \epsilon$ for sufficiently large $k$.  Thus if 
$|\varphi(z)| \leq r$ we have for sufficiently large $k$ that 
\[
|Tf_k(z)| \leq \|u\|_\infty \epsilon.
\]
Thus $\|Tf_k\| < C' \epsilon$ for sufficiently large $k$, so 
$\|Tf_k\|_\infty \rightarrow 0$.  

Suppose now that (a) and (b) hold.  
Using $f(z) = z^n$ as a test function and 
the fact that $T$ is bounded shows that 
$u \in H^\infty$.  
To prove the rest of (c) it suffices to show that 
\[
   \frac{|u(z_k)|}{(1-|\varphi(z_k)|^2)^{\frac{2+\alpha}{p} + n}}
     \rightarrow 0
\]
for any sequence $z_k$ in $\mathbb{D}$ such that 
$|\varphi(z_k)| \rightarrow 1$.  Let 
\(
f_k = K_{0,\varphi(z_k)} 
\) (see Definition \ref{def:modified_kernel}) 
Then 
\[
\|T f_k\|_\infty \geq |T f_k(\varphi(z_k))| \geq 
\frac{C |u(z_k)| |\varphi(z_k)|^n}{(1-|\varphi(z_k)|^2)^{\frac{2 + \alpha}{p} + n}}.
\]
Now let $k \rightarrow \infty$ to complete the proof. 
\end{proof}

We now prove Theorem \ref{CompactExtension}, which we restate for convenience.
Again, Theorem \ref{testfunction} plays a crucial role.
\iflabelexists{CompactExtension}{\setcounter{thrm}{\ref{CompactExtension}}}{\setcounter{thrm}{2}}
\addtocounter{thrm}{-1}
\begin{thrm} \CompactExtensiontext \end{thrm}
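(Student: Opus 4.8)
The plan is to close the cycle (b) $\Rightarrow$ (c) $\Rightarrow$ (d) $\Rightarrow$ (a) $\Rightarrow$ (b), so that every routine link leans on a result already in hand and only one implication carries real content. The equivalence (a) $\Leftrightarrow$ (b) is immediate from \cite[Lemma 3.7]{26}, exactly as in the single-summand Lemma \ref{compact_summand}. For (c) $\Rightarrow$ (d) I would note that the displayed condition in (c) for a fixed index $k$ is precisely condition (c) of Lemma \ref{compact_summand} for the operator $f \mapsto u_k\,(f^{(k)}\circ\varphi_k)$; that lemma therefore makes each summand $D^{k}_{u_k,\varphi_k}$ compact from $A^p_\alpha$ to $H^\infty$, which is (d). Finally (d) $\Rightarrow$ (a) is the elementary fact that a finite sum of compact operators is compact. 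All the substance is in (b) $\Rightarrow$ (c), and this is where Theorem \ref{testfunction} is indispensable.

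Since (b) $\Leftrightarrow$ (a), condition (b) already gives that $T$ is bounded. To see that each weight lies in $H^\infty$, fix $J$ and an arbitrary point $z_0 \in \mathbb{D}$ and apply Theorem \ref{testfunction} with $N = n$, $\lambda_k = \varphi_k(z_0)$, and singled-out index $J$. This yields $f$ with $\|f\|_{p,\alpha} \leq C_{n,p,\alpha}$, $f^{(J)}(\varphi_J(z_0)) = (1-|\varphi_J(z_0)|^2)^{-(2+\alpha)/p - J}$, and $f^{(k)}(\varphi_k(z_0)) = 0$ for every $k \neq J$. Because all cross terms vanish, $Tf(z_0) = u_J(z_0)\,(1-|\varphi_J(z_0)|^2)^{-(2+\alpha)/p - J}$, whence
\[
\frac{|u_J(z_0)|}{(1-|\varphi_J(z_0)|^2)^{(2+\alpha)/p + J}} = |Tf(z_0)| \leq \|T\|\,C_{n,p,\alpha}.
\]
As $(2+\alpha)/p + J > 0$ and $1 - |\varphi_J(z_0)|^2 \leq 1$, this bounds $|u_J(z_0)|$ by $\|T\|\,C_{n,p,\alpha}$ independently of $z_0$, so $u_J \in H^\infty$.

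For the limiting condition I would argue by contradiction. If it fails for some $J$, there are $\delta > 0$ and points $z_m$ with $|\varphi_J(z_m)| \rightarrow 1$ along which the ratio stays $\geq \delta$. Applying Theorem \ref{testfunction} with $\lambda_k = \varphi_k(z_m)$ and singled-out index $J$ gives $f_m$ with $\|f_m\|_{p,\alpha} \leq C_{n,p,\alpha}$ whose cross terms again vanish, so $\|Tf_m\|_\infty \geq |Tf_m(z_m)| = |u_J(z_m)|\,(1-|\varphi_J(z_m)|^2)^{-(2+\alpha)/p - J} \geq \delta$. This will contradict (b) once $f_m \rightarrow 0$ uniformly on compact subsets of $\mathbb{D}$ is verified, and that verification is the one genuinely delicate point.

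The difficulty is that the ``moreover'' clause of Theorem \ref{testfunction} is phrased for $\min_k |\lambda_k| \rightarrow 1$, whereas here only $|\varphi_J(z_m)| \rightarrow 1$ is known. I would therefore appeal to the form of the interpolant produced in the proof of Theorem \ref{testfunction}: once $|\varphi_J(z_m)|$ exceeds the threshold $R$, the function $f_m$ is a product $g\cdot h$ in which $h$ is uniformly bounded by Lemma \ref{lemma:aux_interpolation} and $g$ is a fixed linear combination, with bounded coefficients, of the modified kernels $K_{m_k, z_k}$ of Definition \ref{def:modified_kernel} whose centers $z_k$ all lie within a fixed pseudo-hyperbolic distance of $\varphi_J(z_m)$. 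Consequently $|\varphi_J(z_m)| \rightarrow 1$ forces every $|z_k| \rightarrow 1$; since $K_{m_k,\lambda} \rightarrow 0$ uniformly on compact sets as $|\lambda| \rightarrow 1$ and the coefficients stay bounded, $g \rightarrow 0$, and hence $f_m = g\cdot h \rightarrow 0$ uniformly on compact subsets. This contradiction establishes (b) $\Rightarrow$ (c) and completes the cycle.
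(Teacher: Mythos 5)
Your proof is correct, and its overall architecture matches the paper's: \cite[Lemma~3.7]{26} gives (a)$\Leftrightarrow$(b), Lemma~\ref{compact_summand} handles the summands, (d)$\Rightarrow$(a) is the trivial step, and Theorem~\ref{testfunction} carries the real content (the paper proves (a)$\Rightarrow$(c) where you prove (b)$\Rightarrow$(c), an immaterial difference given (a)$\Leftrightarrow$(b)). Two genuine differences are worth recording. First, you obtain $u_k \in H^\infty$ for all $k$ in one stroke from the interpolating functions, using $(1-|\varphi_J(z_0)|^2)^{(2+\alpha)/p + J} \leq 1$; the paper instead runs an induction with the monomials $z^k$ as test functions. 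Both work, and yours avoids the induction. Second, and more significantly, the delicate point you isolate is real: the ``moreover'' clause of Theorem~\ref{testfunction} is stated for $\min_j |\lambda_j| \to 1$, whereas in this application only $|\varphi_J(z_m)| \to 1$ is known, so the theorem \emph{as stated} does not directly yield the locally uniform convergence $f_m \to 0$ needed to invoke (b). The paper's proof silently elides this point (it applies the test functions and concludes ``letting $k \to \infty$ gives the result''), so your repair---reopening the proof of Theorem~\ref{testfunction}, writing $f_m = g \cdot h$ with $\|h\|_\infty$ uniformly bounded and $g$ a bounded-coefficient combination of the kernels $K_{m_k, z_k}$ whose centers satisfy $\rho(z_k, \varphi_J(z_m)) < \epsilon$, hence $1 - |z_k| \leq C\,\bigl(1-|\varphi_J(z_m)|\bigr) \to 0$ and $g \to 0$ uniformly on compacta---supplies exactly the justification the paper's argument implicitly relies on. Two trifles: when all the $\varphi_j(z_m)$ are already pseudo-hyperbolically close to $\varphi_J(z_m)$, the interpolant is $g$ alone with no factor $h$ (this changes nothing); and your evaluation of $Tf_m$ at the point $z_m$ itself, rather than at $\varphi_J(z_m)$, is the correct reading of the corresponding display in the paper, which contains typos at that spot.
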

\begin{proof}%
  As before, the equivalence of (a) and (b) follows from
  \cite[Lemma 3.7]{26}. The equivalence of (c) and (d) follows from Lemma \ref{compact_summand}, and the fact that (d) implies (a) is clear.  
To finish the proof we will prove (a) implies (c).    
Assume (a) holds. Using $1$ as a test function implies that $u_0 \in H^\infty$. Using $z$ as a test function and the fact that 
$u_0 \in H^\infty$  implies that $u_1 \in H^\infty$. In general, an induction argument and using $z^k$ as test function shows that 
$u_k \in H^\infty$ for $0 \leq k \leq n$.  The proof of the 
rest of the theorem is similar to Lemma \ref{compact_summand}.  
Let $0 \leq J \leq N$. Let $z_k$ be any sequence in $\mathbb{D}$ such that $|\varphi_J(z_k)| \rightarrow 1$. We let $f_k$ be the function from Theorem \ref{testfunction} 
with $\lambda_j = \varphi_j(z_k)$ and with the given value of $J$.  
Then 
\[
\|T f_k\|_\infty \geq 
|T f_k(\varphi_j(z_k))| \geq 
\frac{C |u(z_k)| |\varphi(z_k)|^J}{(1-|\varphi_J(z_k)|^2)^{\frac{2 + \alpha}{p} + J}}.
\]
Letting $k \rightarrow \infty$ gives the result.  
\end{proof}

\providecommand{\bysame}{\leavevmode\hbox to3em{\hrulefill}\thinspace}
\providecommand{\MR}{\relax\ifhmode\unskip\space\fi MR }
\providecommand{\MRhref}[2]{%
  \href{http://www.ams.org/mathscinet-getitem?mr=#1}{#2}
}
\providecommand{\href}[2]{#2}

\end{document}

